\pgfplotsset{width=8cm,compat=1.9}
\newtheorem{theorem}{Theorem}
\newtheorem{lemma}{Lemma}
\newtheorem{proposition}{Proposition}
\newtheorem{remark}{Remark}
\newtheorem{example}{Example}
\title{Construction of multivariate polynomial approximation kernels via semidefinite programming}
\author{
	Felix Kirschner
	\thanks{Tilburg University,	f.c.kirschner@tilburguniversity.edu   } 
	\and 
	\textbf{Etienne de Klerk}
	\thanks{Tilburg University,	e.deklerk@tilburguniversity.edu
	\newline This work is supported by the European Union’s Framework Programme for Research and Innovation Horizon 2020 under the Marie Sk\l odowska-Curie grant agreement N. 813211 (POEMA).}
}
\begin{document}
	\maketitle
	
	\begin{abstract}
		In this paper we construct a hierarchy of multivariate polynomial approximation kernels for uniformly continuous functions on the hypercube via semidefinite programming. 
  We give details on the implementation of the semidefinite programs defining the kernels. 
  Finally, we show how symmetry reduction may be performed to increase numerical tractability. 
		\keywords{Polynomial kernel method \and semidefinite programming}
	\end{abstract}
	
	\section{Introduction}
	A classical problem in approximation theory  is uniform  approximation of a given function by linear combinations of orthogonal polynomials. In the following we will denote by $\mathbf{K}$ the $n$-dimensional hypercube, i.e. $\mathbf{K} := [-1,1]^n$. Orthogonality of polynomials may be defined in the following way. Let $\mu$ be a positive finite Borel measure supported on the compact set $\mathbf{K} = [-1,1]^n \subset \mathbb{R}^n$.
	 We say two functions $f,g \in \mathcal{C}(\mathbf{K})$ are orthogonal (w.r.t.\ $\mu$), if
	\[
	\langle f, g \rangle_{\mu} := \int_{\mathbf{K} }f(\textbf{x})g(\textbf{x})\mathrm{d}\mu(\textbf{x}) = 0.
	\]
	Let $\{ p_{\alpha}\}_{\alpha \in \mathbb{N}^n}$ be a system of orthogonal polynomials with respect to a measure $\mu$.
	 Consider a kernel $K_r(\textbf{x},\textbf{y}) : \mathbb{R}^n \times \mathbb{R}^n \rightarrow \mathbb{R}$ given by
	\begin{equation}\label{polKernel}
	K_r(\textbf{x},\textbf{y}) := \sum_{\alpha \in \mathbb{N}^n_r} g_\alpha p_\alpha(\textbf{x})p_\alpha(\textbf{y}),
	\end{equation}
	for given constants $g_\alpha$ for $\alpha \in \mathbb{N}^n_r$,
	where  $\mathbb{N}^n_r = \{ \alpha \in \mathbb{N}^n : \alpha_1 + \dots + \alpha_n \le r \}$.
	Then the convolution operator, defined as
	\begin{equation}\label{convOperator}
	\mathcal{K}^{(r)}(f)(\textbf{x}) := \int_{\mathbf{K}} f(\textbf{y})K_r(\textbf{x},\textbf{y})\mathrm{d}\mu(\textbf{y}),
	\end{equation}
	maps any $\mu$-integrable function $f$ to a polynomial of degree at most $r$. More precisely,
	\begin{equation}\label{calKoperator}
	\mathcal{K}^{(r)}(f)(\textbf{x})  = \sum_{\alpha \in \mathbb{N}^n_r} b_\alpha p_\alpha(\textbf{x}) \,,
	\mbox{ where }
	b_\alpha = \langle p_\alpha, f \rangle_\mu\, g_\alpha.
	\end{equation}
	The coefficients $g_\alpha$ of the kernel $K_r$ determine the approximation.  
	Our goal is to approximate a given continuous $f$ defined on $\mathbf{K}$, by a sequence of polynomials of increasing degree, such that the sequence converges to $f$, uniformly on $\mathbf{K}$.
	We further introduce a quantity
	$$\sigma_r := \left(\int_{\mathbf{K} \times \mathbf{K}} \|\textbf{x}-\textbf{y}\|^2 K_r(\textbf{x},\textbf{y}) \mathrm{d}\mu(\textbf{x})\mathrm{d}\mu(\textbf{y})\right)^{1/2},$$  called the {\em resolution} of the kernel $K_r(\textbf{x},\textbf{y})$ in \cite{Weisse2006}. Our aim in this paper is to construct kernels $K_r$ with minimal resolution such that $\mathcal{K}^{(r)}(f)$ converges to $f$ uniformly on $\mathbf{K}=[-1,1]^n$ and to bound the rate of convergence in terms of $\sigma_r$. The resolution may be interpreted as a measure of how much \emph{mass} of the kernel is concentrated away from the line where $\textbf{x} = \textbf{y}$. If all $g_\alpha = 1$ in expression \eqref{calKoperator} then $\mathcal{K}^{(r)}$ is the identity operator on the space of polynomials of degree at most $r$, and the associated resolution is zero.. This kernel will have all of its mass concentrated at $\textbf{x} = \textbf{y}$. To ensure uniform convergence we want a kernel that has as much mass as possible at the line $\textbf{x}= \textbf{y}$ for every $r \in \mathbb{N}$, while fulfilling some other properties. 

	\subsection*{ Outline and contributions}
	The aim of this paper is to present a computational procedure, based on semidefinite programming (SDP) (cf. \cite{Todd01,Vandenberghe1996}), to construct non-negative polynomial kernels on $\mathbf{K} = [-1,1]^n$ suitable for approximation. 
	We show that these kernels generalize a kernel which is called the Jackson kernel in \cite{Weisse2006}, but this is different from the original kernels introduced by Jackson in \cite{Jackson1912}; we give more details on this in \cref{sec:construction optimal kernels}.
	As the orthogonal basis we will use products of univariate Chebyshev polynomials, as reviewed in \cref{subsec:Chebyshev}, and the fixed measure $\mu$ will be the corresponding product of measures so that the Chebyshev polynomials are orthogonal. The resulting {\em kernel polynomial method} is reviewed in \cref{sec:KPM} for the univariate case $(n=1)$, and extended to the multivariate case in \cref{sec:multivariate KPM}. In \cref{sec:implementation} we discuss how to form the SDP problems that yield the optimal kernels, in the sense that their resolution is minimal. In \cref{sec:symmetry} we show how to exploit algebraic symmetry by using techniques from \cite{Riener2013,Vallentin2009} to reduce the size of these SDP problems.
	We show in \cref{sec:comparison to univariate Jackson} that our constructions are superior to simply multiplying optimal univariate kernels in a well-defined sense. 
	Finally, in \cref{sec:numComp} we give further details of our numerical computations and show they are useful in practice to approximate non-differentiable functions and related applications in physics.

	\subsection{Prior and related work} 
	Setting all coefficients of the kernel $K_r$ equal to $g_\alpha = (\langle p_\alpha, p_\alpha \rangle_\mu)^{-1}$, the resulting approximation in the setting described above is simply the Chebyshev expansion truncated at degree $r$. In the univariate case, the resulting kernel is known as the Dirichlet kernel \cite{Dirichlet1829}. In the multivariate case it is known as the Christoffel-Darboux kernel (named after \cite{Christoffel1858, Darboux1878}). This approximation works well for analytic functions, as reviewed in \cite{Trefethen2017}. Lasserre \cite{Lasserre2021} draws an interesting connection between the celebrated moment-SOS hierarchy \cite{Lasserre2001} and the Christoffel-Darboux kernel. For non-differentiable functions, the truncated Chebyshev expansion, i.e., the Christoffel-Darboux kernel, may lead to unwanted oscillations at points where the function is not differentiable, as reviewed in \cite{Weisse2006}. These oscillations are often referred to as the Gibbs phenomenon, see \cite{Gottlieb1997} for a survey. In \cite{Marx2021} the authors develop a method for approximating possibly discontinuous functions using the Christoffel-Darboux kernel, where the Gibbs phenomenon does not occur. Other approaches to get rid of unwanted oscillations is to make use of non-negative kernels as has been done in \cite{Weisse2006}. For this reason, positive approximation kernels are popular in physics for the approximation of non-smooth functions in various settings. The reader is again referred to the excellent survey \cite{Weisse2006} for more details. The aim of our work is to generalize these kernels to several variables in a natural way, thus providing computational alternatives to using products of univariate kernels.

	We continue to fix some notation, and the review properties of univariate Chebyshev polynomials for later use.
	Our exposition closely follows the survey \cite{Weisse2006}.

	\subsection{Notation}
	 For $1 < n \in \mathbb{N}$ we define $[n] :=\{1, 2, \dots, n \}$. The canonical unit vectors in $\mathbb{R}^n$ are denoted by $e_i$, i.e., $e_i$ is the zero vector with a $1$ at the $i$-th component. The polynomial ring is denoted by $\mathbb{R}[\textbf{x}]$, where $\textbf{x} = (x_1, \dots, x_n)$. A polynomial $p \in \mathbb{R}[\textbf{x}]$ is said to be a \emph{sum-of-squares} if it can be written as a sum of squared polynomials, i.e., $p(\textbf{x}) = \sum_{i=1}^k q_i(\textbf{x})^2$ for $k \in \mathbb{N}$ and $q_i \in \mathbb{R}[\textbf{x}]$ for all $i \in [k]$. The set of polynomials of degree less than or equal to $r$ will be denoted by the set $\mathbb{R}[\textbf{x}]_r$. For a set $\mathbf{K} \subseteq \mathbb{R}^n$ we denote by $\mathcal{C}(\mathbf{K})$ the set of continuous functions on $\mathbf{K}$. $\mathbb{S}^n$ is the set of $n \times n$ symmetric matrices, and $\mathbb{S}^n_{\succeq}$ is the set of symmetric positive semidefinite (psd) matrices. A symmetric matrix $S$ is called positive semidefinite if $x^TSx \ge 0$ for all $x \in \mathbb{R}^n$. We may write $S \succeq 0$ to indicate $S$ is psd. For $\alpha, \beta \in \mathbb{N}^n$ the Kronecker-$\delta_{\alpha, \beta}$ is defined as 
	\[
	\delta_{\alpha,\beta} = \begin{cases} 1, \text{ if } \alpha_i = \beta_i \; \forall i \in [n] \\ 0, \text{ otherwise.} \end{cases}
	\]
	For two matrices $A,B$ of appropriate size we define the trace inner product $\langle A, B \rangle := \mathrm{Tr}(A^TB)$. We also set 
	\[
	s(n,r) = \binom{n+r}{r}.
	\]

	\subsection{Chebyshev polynomials}\label{subsec:Chebyshev}
	
	Let $\mathbf{K} = [-1,1]$  and fix the measure $\mu$ on $\mathbf{K}$ defined by $\mathrm{d}\mu(x) = (\pi \sqrt{1-x^2})^{-1}\mathrm{d}x$, $x \in \mathbf{K}$. The Chebyshev polynomials of the first kind form a system of orthogonal polynomials. We will refer to the $k$-th Chebyshev polynomial of first kind as $T_k(x)$. We have for $k \in \mathbb{Z}$
	\begin{align}
	T_{k}(x) & = \cos \left( k \arccos \left(x\right) \right) .
	\end{align}
	
	Define for $f,g : [-1,1] \rightarrow \mathbb{R}$
	\[ \langle f,g \rangle_{\mu} = \int_{-1}^1 \frac{f(x)g(x)}{\pi \sqrt{1-x^2} } \mathrm{d}x \]
	to obtain the following orthogonality relations for the Chebyshev polynomials of the first kind
	\begin{align}
	\langle T_k, T_m \rangle_{\mu} &= \frac{1+\delta_{k,0}}{2} \delta_{k,m}.
	\end{align}
	Chebyshev polynomials exhibit nice stability and convergence properties in practice which is why they are the first choice in many applications. It is straightforward to generalize the Chebyshev polynomials to the multivariate case. Let $\mathbf{K} = [-1,1]^n$ and define
	\[
	\mathrm{d}\mu(\textbf{x}) := \prod_{i=1}^{n} \frac{1}{\pi \sqrt{1-x_i^2}}\mathrm{d}\textbf{x}.
	\]
	Then, for $\alpha \in \mathbb{N}^n$ the corresponding multivariate Chebyshev polynomial of the first kind is defined as
	\[
	T_\alpha(\textbf{x}) = \prod_{i=1}^n T_{\alpha_i}(x_i).
	\]
	
	The orthogonality relations extend in the following way
	
	\begin{equation*}
		\begin{aligned}
		\langle T_\alpha, T_\beta \rangle_{\mu} = \int_{\mathbf{K}}T_\alpha(\textbf{x})T_\beta(\textbf{x}) \mathrm{d}\mu(\textbf{x})
		 & = \prod_{i=1}^{n} \int_{-1}^{1} \frac{T_{\alpha_i}(x_i)T_{\beta_i}(x_i)}{\pi \sqrt{1-x_i^2}}\mathrm{d}x_i \\
		& = \prod_{i= 1}^{n} \frac{1+\delta_{\alpha_i,0}}{2}\delta_{\alpha_i,\beta_i} = c_\alpha \delta_{\alpha, \beta},
		\end{aligned}
	\end{equation*}

	with $c_\alpha = \left(\frac{1}{2}\right)^{H(\alpha)}$, 
	where $H(\alpha)$ is the Hamming weight of $\alpha$, i.e. the number of non-zero entries.

	\subsection{Main result}

	Using the Chebyshev polynomials we are ready to state our main result. We will be interested in kernels $K_r(\textbf{x},\textbf{y})$ satisfying the following properties for $\mathbf{K} = [-1,1]^n$:

	
	\begin{enumerate}
		\item[P1.]
		$K_r(\textbf{x},\textbf{y}) = \sum_{\alpha \in \mathbb{N}^n_r}g_\alpha T_\alpha(\textbf{x})T_\alpha(\textbf{y})$, for $g_\alpha \in \mathbb{R}$ for $ \alpha \in \mathbb{N}^n_r$
		\item[P2.]
		$K_r(\textbf{x},\textbf{y}) \ge 0$ for all $(\textbf{x},\textbf{y}) \in \mathbf{K} \times \mathbf{K}$ and all $r$;
		\item[P3.]
		$  \int_{\mathbf{K} } K_r(\textbf{x},\textbf{y}) \mathrm{d}\mu(\textbf{y}) = 1$ for all  $\textbf{x} \in \mathbf{K}$ for all $r$;
		\item[P4.]
		$\lim_{r \rightarrow \infty} \sigma_r = 0$.
	\end{enumerate}
	

	In the statement of \cref{prop1}, recall that the modulus of continuity of $f \in C(\mathbf{K})$ is defined as
	\[
	\omega_f(\delta) := \max_{\stackrel{\textbf{x},\textbf{y} \in \textbf{K}}{\|\textbf{x}-\textbf{y}\| \le \delta}} |f(\textbf{x}) - f(\textbf{y})|.
	\]

	\begin{proposition}\label{prop1}
		Let $\mathbf{K}=[-1,1]^n$ and  $f:\mathbf{K} \rightarrow \mathbb{R}$ be continuous on $\mathbf{K}$ with modulus of continuity $\omega_f$.
		Under the above  conditions P1-P4 on $K_r(\textbf{x},\textbf{y})$ one has $\mathcal{K}^{(r)}(f) \rightarrow f$ as $r \rightarrow \infty$, uniformly on $\mathbf{K}$.
		Moreover,   
		\begin{equation}
		 \label{ineq:bar resolution}
		\|\mathcal{K}^{(r)}(f) - f\|_{\infty,\mathbf{K}} \le  2\left(1+\frac{\pi}{\sqrt{2}}\right)\omega_f({\sigma_r}).
		\end{equation}
	\end{proposition}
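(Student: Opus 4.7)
The plan is to apply the classical Shisha--Mond route for positive linear operators, reducing the claim to a uniform bound on the pointwise resolution
\[
\sigma_r(\mathbf{x})^2 := \int_{\mathbf{K}} K_r(\mathbf{x},\mathbf{y}) \|\mathbf{x}-\mathbf{y}\|^2 \, d\mu(\mathbf{y}),
\]
which satisfies $\sigma_r^2 = \int_{\mathbf{K}} \sigma_r(\mathbf{x})^2 \, d\mu(\mathbf{x})$ by Fubini. By P3, $\mathcal{K}^{(r)}(1) \equiv 1$, so the error can be rewritten as $\mathcal{K}^{(r)}(f)(\mathbf{x}) - f(\mathbf{x}) = \int_{\mathbf{K}} K_r(\mathbf{x},\mathbf{y})(f(\mathbf{y}) - f(\mathbf{x}))\,d\mu(\mathbf{y})$.

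Using P2, the pointwise estimate $|f(\mathbf{y}) - f(\mathbf{x})| \le \omega_f(\|\mathbf{x}-\mathbf{y}\|)$, and the standard subadditivity inequality $\omega_f(\lambda t) \le (1+\lambda)\omega_f(t)$ for $\lambda \ge 0$, for any $\delta > 0$ one obtains
\[
|\mathcal{K}^{(r)}(f)(\mathbf{x}) - f(\mathbf{x})| \le \omega_f(\delta)\left(1 + \tfrac{1}{\delta}\int_{\mathbf{K}} K_r(\mathbf{x},\mathbf{y})\|\mathbf{x}-\mathbf{y}\|\,d\mu(\mathbf{y})\right).
\]
Since $K_r(\mathbf{x},\cdot)\,d\mu$ is a probability measure on $\mathbf{K}$ by P2 and P3, the Cauchy--Schwarz inequality bounds the inner integral by $\sigma_r(\mathbf{x})$. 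Setting $\delta = \sigma_r(\mathbf{x})$ yields the Shisha--Mond-type estimate $|\mathcal{K}^{(r)}(f)(\mathbf{x}) - f(\mathbf{x})| \le 2\omega_f(\sigma_r(\mathbf{x}))$, and a further application of subadditivity with $t = \sigma_r$ gives $\omega_f(\sigma_r(\mathbf{x})) \le (1 + \sigma_r(\mathbf{x})/\sigma_r)\omega_f(\sigma_r)$. Taking the supremum in $\mathbf{x}$, the claim reduces to the uniform estimate $\sup_{\mathbf{x}\in\mathbf{K}} \sigma_r(\mathbf{x}) \le (\pi/\sqrt{2})\,\sigma_r$.

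Establishing this uniform bound is the main obstacle and is where the specific Chebyshev structure enters. The crucial observation is that $\sigma_r(\mathbf{x})^2$ is a polynomial in $\mathbf{x}$ of total degree at most $2$: expanding $\|\mathbf{x}-\mathbf{y}\|^2 = \|\mathbf{x}\|^2 - 2\mathbf{x}\cdot\mathbf{y} + \|\mathbf{y}\|^2$ and invoking the orthogonality of the $T_\alpha$ under $d\mu$, one sees that only the coefficients $g_0, g_{e_i}$, and $g_{2e_i}$ of the Chebyshev expansion of $K_r$ can contribute. A direct computation then yields an identity of the form $\sigma_r(\mathbf{x})^2 = \sigma_r^2 + \tfrac{1}{2}\sum_{i=1}^n a_i\, T_2(x_i)$ for explicit coefficients $a_i$ depending on those $g_\alpha$. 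Since $|T_2(x_i)| \le 1$ on $\mathbf{K}$ while $\sigma_r(\mathbf{x})^2 \ge 0$ everywhere (as an integral of a non-negative quantity against a probability measure), evaluating the non-negativity constraint at the minimising points forces $\sum_i |a_i|$ to be controlled by a constant multiple of $\sigma_r^2$, which in turn produces the desired bound on the ratio $\sigma_r(\mathbf{x})/\sigma_r$.

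Uniform convergence is then immediate from the resulting inequality: P4 gives $\sigma_r \to 0$ as $r \to \infty$, and $\omega_f(\sigma_r) \to 0$ follows from the continuity of $\omega_f$ at the origin, which is a consequence of the uniform continuity of $f$ on the compact set $\mathbf{K}$.
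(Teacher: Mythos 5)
Your proof is correct, and it takes a genuinely different route from the paper's. The paper never works with the operator \eqref{convOperator} on the cube directly: it passes to the torus $[-\pi,\pi]^n$, where $\mathcal{K}^{(r)}$ becomes a convolution (so the second moment of the kernel is automatically independent of the evaluation point), bounds that moment via the elementary inequality $\|\mathbf{y}\|^2\le \tfrac{\pi^2}{2}\sum_i(1-\cos y_i)$ — this is where the constant $\pi/\sqrt{2}$ originates — and then transfers back to $[-1,1]^n$ through $x_i=\cos\theta_i$, which costs an extra lemma ($\omega_g\le\omega_f$) plus a verification that the torus-side convolution and the Chebyshev-side operator produce the same polynomial. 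You instead stay on the cube, where the pointwise second moment $\sigma_r(\mathbf{x})^2$ genuinely varies with $\mathbf{x}$, and your key observation — that orthogonality confines this variation to $\sigma_r(\mathbf{x})^2=\sigma_r^2+\tfrac12\sum_i a_iT_2(x_i)$ (with $a_i=1-2g_{e_i}+g_{2e_i}$ in the paper's normalization), so that non-negativity at the minimizers forces $\tfrac12\sum_i|a_i|\le\sigma_r^2$ and hence $\sup_{\mathbf{x}}\sigma_r(\mathbf{x})^2\le 2\sigma_r^2$ — does check out. What each approach buys: yours is self-contained (no periodization, no transfer lemmas) and in fact yields the sharper constant $2(1+\sqrt{2})$, which dominates the claimed $2(1+\pi/\sqrt{2})$ since $\sqrt{2}\le\pi/\sqrt{2}$; the paper's detour through trigonometric polynomials is what it needs anyway for the SDP construction in the rest of the article, so the transfer machinery is not wasted there. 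Two small points to tidy up: write out the ``direct computation'' of the $a_i$, and handle the degenerate case $\sigma_r(\mathbf{x})=0$ (or $\sigma_r=0$) before dividing by it in the Shisha--Mond step — letting $\delta\downarrow 0$ there gives the bound trivially.
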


  Our main result is the construction of kernels whose resolutions satisfy $\sigma_r = O(1/r)$ using semidefinite programming techniques (see \cref{propConvRate}). This proves that our kernels yield the best possible rate of convergence for continuous $f$ that are not differentiable, due to Bernstein's theorem (see \cite{Bernstein1912}). The proof of \cref{prop1} will be postponed to later, until we have given all necessary definitions and auxiliary results. Let us mention at this point that \cref{prop1} is a known result in approximation theory. Indeed, the argument is essentially as given in the PhD thesis of Jackson \cite{Jackson1911}. We simply give a proof in our specific setting for completeness, since we could not find a statement of \cref{prop1} in a suitable form in the literature.

	\section{The kernel polynomial method}
	\label{sec:KPM}
	We begin by considering kernels to approximate univariate functions. Let $K_r$ be a kernel of the following form
	\begin{equation}\label{Kr}
	K_r(x,y) = g_0+2\sum_{k=1}^{r} g_k T_k(x)T_k(y).
	\end{equation}
	 Kernels of this kind clearly satisfy property P1. If we set $g_0 = 1$, the resulting kernel also satisfies P3. In the following, we will explore how to find kernels that of this form that additionally satisfy P2 and are therefore suitable for approximation. For this we will first introduce trigonometric polynomials.
	
	\subsection{Trigonometric polynomials}
	
	A trigonometric polynomial  $p(t)$ of degree $r$ is defined as
	\[ p(t) = p_0 + \sum_{k=1}^r \left(p_k \cos \left( kt  \right) + p_{-k}\sin \left(kt \right) \right), \]
	for $p_k \in \mathbb{R}$ for $k = -r, -r+1, \dots, r-1,r$. The following lemma is proved in \cite{Gatermann2004}.
	
	\begin{lemma}\label{nonnegtrigpol}
		If $p(t)$ is a non-negative trigonometric polynomial of degree $r$, then there exists a positive semidefinite matrix $Q\in \mathbb{S}^{r+1}_{\succeq}$ such that $p(t) = v^TQv$ where
		\[ v^T = [1, \cos (t), \dots, \cos (mt), \sin (mt) ] \]
		if $r = 2m$ for some $m \in \mathbb{N}$
		and
		\[ v^T = \left[\cos \left(\frac{t}{2}\right), \sin \left(\frac{t}{2}\right), \cos \left(t+\frac{t}{2}\right) , \dots, \cos \left(mt+\frac{t}{2}\right) , \sin \left(mt+\frac{t}{2}\right) \right] \]
		if $r = 2m+1$ for some $m \in \mathbb{N}$.
	\end{lemma}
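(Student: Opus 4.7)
The plan is to reduce the statement to the Fej\'er--Riesz theorem on nonnegative trigonometric polynomials and then translate the resulting sum-of-squares representation into the required Gram matrix form. First, I would write $p(t)$ in exponential form as $p(t)=\sum_{k=-r}^{r}c_k e^{ikt}$, note that the reality of $p$ forces $c_{-k}=\overline{c_k}$, and invoke Fej\'er--Riesz: a nonnegative trigonometric polynomial of degree $r$ factors as $p(t)=|q(e^{it})|^2$ for a suitable polynomial $q$ in $e^{it}$. In the even case $r=2m$ one may take $q$ to have ordinary integer exponents $0,1,\dots,m$; in the odd case $r=2m+1$ one uses the natural ``half-integer'' variant in which $q$ involves exponents $\tfrac12,\tfrac32,\dots,m+\tfrac12$, obtained from the usual Fej\'er--Riesz factorization by extracting an $e^{-irt/2}$ factor and repairing conjugate-reciprocal root pairs on the unit circle.

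Second, splitting $q(e^{it})=A(t)+iB(t)$ into real and imaginary parts gives the sum of two squares $p(t)=A(t)^2+B(t)^2$ with $A,B$ real. The key observation is that $A$ and $B$ lie in the span of the entries of $v$: in the even case they are real trigonometric polynomials of degree $m$, which is exactly the span of $\{1,\cos(kt),\sin(kt):k=1,\dots,m\}$, i.e.\ the entries of $v$; in the odd case they are spanned by the half-integer-frequency shifted cosines and sines $\cos(kt+t/2),\sin(kt+t/2)$ for $k=0,1,\dots,m$, which are precisely the entries of $v$ in the lemma.

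Third, write $A(t)=\xi_A^T v$ and $B(t)=\xi_B^T v$ for real coefficient vectors $\xi_A,\xi_B\in\mathbb{R}^{r+1}$. Then
\[
 p(t)=A(t)^2+B(t)^2 = v^T\bigl(\xi_A\xi_A^T+\xi_B\xi_B^T\bigr)v = v^T Q v,
\]
and $Q:=\xi_A\xi_A^T+\xi_B\xi_B^T\in\mathbb{S}^{r+1}_{\succeq}$ since it is a sum of two rank-one psd matrices. This delivers the claimed representation.

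The main obstacle is the odd-degree case: the textbook Fej\'er--Riesz theorem produces $q$ of (integer) degree $r$, whose real and imaginary parts live in a space of dimension $2r+1$ rather than the $r+1=2m+2$ dictated by the half-integer basis $v$. The technical work therefore lies in justifying the half-integer factorization for odd $r$, i.e.\ showing that one can pair the unit-circle roots of the auxiliary Laurent polynomial $z^{r/2}p(\arg z)$ symmetrically across the unit circle so that a square root with exponents $\tfrac12,\dots,m+\tfrac12$ exists. The remaining steps are routine linear algebra.
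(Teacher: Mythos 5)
The paper does not actually prove this lemma; it cites Gatermann--Parrilo for it, so there is no in-paper argument to compare against. Your Fej\'er--Riesz route is the standard one and, in outline, it works: factor $p(t)=|q(e^{it})|^2$, recenter, split into real and imaginary parts, and assemble the rank-two Gram matrix $Q=\xi_A\xi_A^T+\xi_B\xi_B^T$. Two points need correcting, though.

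First, your even-case claim that one may take $q$ with exponents $0,1,\dots,m$ is false: such a $q$ would make $|q(e^{it})|^2$ a trigonometric polynomial of degree at most $m$, whereas $p$ has degree $r=2m$. Fej\'er--Riesz gives $q$ of degree $r$ in both parities, and the even case needs exactly the same recentering you reserve for the odd case: replace $q(e^{it})$ by $F(t)=e^{-imt}q(e^{it})$, whose exponents are $-m,\dots,m$, so that $\mathrm{Re}\,F$ and $\mathrm{Im}\,F$ are genuine degree-$m$ trigonometric polynomials lying in the span of the entries of $v$.

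Second, the ``main obstacle'' you identify in the odd case is not actually there, and the root-pairing argument you sketch is both unnecessary and not quite coherent (a factor $q$ with exponents only $\tfrac12,\dots,m+\tfrac12$ would again produce $|q|^2$ of degree $m$, not $2m+1$). The resolution is the same recentering: with $F(t)=e^{-i(m+\frac12)t}q(e^{it})$ the exponents become $\pm\tfrac12,\pm\tfrac32,\dots,\pm(m+\tfrac12)$, a set symmetric about zero; since $\cos(-\omega t)=\cos(\omega t)$ and $\sin(-\omega t)=-\sin(\omega t)$, both $\mathrm{Re}\,F$ and $\mathrm{Im}\,F$ collapse into the $(r+1)$-dimensional span of $\cos(jt+\tfrac{t}{2})$, $\sin(jt+\tfrac{t}{2})$, $j=0,\dots,m$, i.e.\ the entries of $v$. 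With these two repairs the remaining linear algebra is exactly as you describe, and the proof is complete.
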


	\begin{remark}
		\sloppy Let us mention that there are stronger results of the kind of \cref{nonnegtrigpol}. 
		For example Corollary 2 in \cite{Foucart2017}. We state this weaker result for the ease of exposition.
	\end{remark}
	
	Note that every trigonometric polynomial of the form
	
	\begin{equation}\label{trigpol} p(t) = g_0 + 2\sum_{k=1}^{r} g_k \cos \left( k t \right)  \end{equation}
	
	gives rise to a kernel of the form
	
	\[ K_r(x,y) = g_0 + 2 \sum_{k=1}^r g_k T_k(x)T_k(y). \]
	
	To see this, consider the following substitution

	\begin{multline}\label{subst}
      \frac{1}{2} \left[ p(\arccos (x) + \arccos (y)) + p(\arccos (x) - \arccos (y)) \right]\\ 
      =  g_0 + 2 \sum_{k=1}^{r}g_k \frac{1}{2}\left[ \cos \left( k ( \arccos (x) + \arccos (y))\right) + \cos  \left( k ( \arccos (x) - \arccos (y))\right)  \right] \\
      = g_0 + 2 \sum_{k=1}^{r} g_k \cos \left( k \arccos (x) \right)  \cos \left(k \arccos (y) \right) = g_0 + 2 \sum_{k=1}^{r} g_k T_k(x) T_k(y). \qquad
\end{multline}
	
	If $p(t)$ is non-negative on $[-\pi,\pi]$, then $K_r(x,y)$ is non-negative on $[-1,1]^2$.

	\begin{theorem}\label{fejer}{(Fej\'er (1915))}
		Every non-negative trigonometric polynomial of degree $r$ of the form \[ p(t) = \lambda_0 + \lambda_1 \cos t  + \mu_1 \sin t  + \dots + \lambda_r \cos rt  + \mu_r \sin rt  \]
		can be written as \[ p(t) =  \left \vert \sum_{\nu = 0}^{r} c_\nu \mathrm{e}^{i\nu t} \right \vert^2 	\] for $c_\nu \in \mathbb{C}$.
	\end{theorem}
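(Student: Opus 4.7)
The plan is to transport the problem to the unit circle of $\mathbb{C}$ via $z=e^{it}$, turn the statement into a statement about polynomial factorization that respects a reciprocal symmetry, and then take a ``square root'' by selecting half of the roots.

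First I would rewrite $p$ using Euler's formula as a Laurent polynomial
$$p(t)=\sum_{k=-r}^{r} a_k e^{ikt},$$
where the reality conditions $\lambda_k,\mu_k\in\mathbb{R}$ translate into $a_{-k}=\overline{a_k}$. Setting $z=e^{it}$ and clearing negative powers, define the ordinary polynomial
$$Q(z):=z^r\sum_{k=-r}^{r} a_k z^k,$$
which has degree at most $2r$. A direct computation from $a_{-k}=\overline{a_k}$ yields the reciprocal symmetry $Q(z)=z^{2r}\,\overline{Q(1/\overline{z})}$, so the non-zero roots of $Q$ come in pairs $\{z_j,\,1/\overline{z_j}\}$. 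Moreover, $|Q(e^{it})|=p(t)\ge 0$ implies any zero $w$ of $Q$ with $|w|=1$ is a non-negative local minimum of $p$, and therefore has even multiplicity in $Q$.

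Next I would construct the candidate polynomial $R(z)=\sum_{\nu=0}^{r}c_\nu z^\nu$ by selecting a ``canonical half'' of the zeros: from each reciprocal pair with $|z_j|\ne 1$, keep the one inside the unit disc, and for each unit-circle zero of multiplicity $2m$, keep it with multiplicity $m$. The total number of roots kept (with multiplicity) is exactly $r$, so
$$R(z):=\gamma\prod_{j}(z-z_j)$$
has degree exactly $r$, for a scalar $\gamma\in\mathbb{C}$ to be fixed. Introducing the reciprocal polynomial $\widetilde{R}(z):=z^r\overline{R(1/\overline{z})}$, the construction ensures $R(z)\widetilde{R}(z)=Q(z)$ up to a positive real multiple, which is pinned down by the free constant $|\gamma|$.

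Finally, on the unit circle $|z|=1$ we have $1/\overline{z}=z$ and hence $\widetilde{R}(z)=z^r\overline{R(z)}$, so $R(z)\widetilde{R}(z)=z^r|R(z)|^2$. Combining with $R\widetilde{R}=Q$ and $Q(e^{it})=e^{irt}p(t)$ gives $|R(e^{it})|^2=p(t)$, which is the desired representation.

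The main obstacle is the bookkeeping at the unit-circle zeros: one needs the evenness of their multiplicity in $Q$ (which is why $p\ge 0$ is used crucially) in order for ``half the roots'' to still yield a genuine polynomial and not merely a rational function. A secondary subtlety is fixing $\gamma$: since $p\ge 0$ and $p\not\equiv 0$ force $Q$ to have a positive leading coefficient up to a phase, $|\gamma|$ can indeed be chosen so that $R\widetilde{R}=Q$ on the nose, completing the argument.
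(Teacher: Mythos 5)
The paper states this theorem without proof, citing it as a classical result of Fej\'er, so there is no internal argument to compare against; your proposal must stand on its own, and it does: it is the standard Fej\'er--Riesz root-splitting proof, and the two essential ingredients are both present --- the reciprocal symmetry $Q(z)=z^{2r}\,\overline{Q(1/\overline{z})}$ pairing the nonzero roots $z_j\leftrightarrow 1/\overline{z_j}$, and the use of $p\ge 0$ to force even multiplicity at unit-circle roots so that ``half the roots'' yields a genuine polynomial. Two pieces of bookkeeping deserve an explicit sentence in a written-up version. First, the inference from ``$e^{it_0}$ is a non-negative local minimum'' to ``even multiplicity'' should be spelled out: writing $Q(z)=(z-e^{it_0})^m h(z)$ with $h(e^{it_0})\ne 0$ gives $p(t)=c\,(t-t_0)^m+O\bigl((t-t_0)^{m+1}\bigr)$ with $c\ne 0$ real, and odd $m$ would force a sign change of $p$ across $t_0$. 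Second, the degenerate cases: if $p\equiv 0$ the claim is trivial, and if $\lambda_r=\mu_r=0$ then $a_r=a_{-r}=0$, so $Q$ vanishes to some order $m$ at the origin and has degree $2r-m$; your count of ``exactly $r$ roots kept'' then fails literally, but factoring $Q(z)=z^m\widetilde{Q}(z)$ with $\widetilde{Q}(0)\ne 0$ and $\deg\widetilde{Q}=2(r-m)$ and running the same argument produces $R$ of degree $r-m\le r$, which is all the statement requires. Your normalization of $\gamma$ is also sound: $R\widetilde{R}$ and $Q$ share all roots with multiplicity, hence agree up to a constant, and comparing $e^{-irt}R(e^{it})\widetilde{R}(e^{it})=|R(e^{it})|^2\ge 0$ with $e^{-irt}Q(e^{it})=p(t)\ge 0$ at a point where $p>0$ shows that constant is positive, so it is absorbed into $|\gamma|$.
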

	
	In other words, there is a one-to-one correspondence between trigonometric polynomials of the form
	\[ t \mapsto \lambda_0 + \lambda_1 \cos (t)  + \mu_1 \sin (t)  + \dots + \lambda_r \cos (rt)  + \mu_r \sin (rt)  \]
	that are non-negative for every $t$ and functions of the form
	\[t \mapsto \left \vert \sum_{\nu = 0}^{r} c_\nu \mathrm{e}^{i\nu t} \right \vert^2.  \]
	This correspondence may be leveraged to obtain kernels with minimum resolution, which is done in the next section.
	
	\subsection{Constructing optimal kernels}\label{sec:construction optimal kernels}
	In this subsection we will revisit the approach described in \cite{Weisse2006}, showing the kernel they obtain has minimum resolution among all non-negative kernels on $[-1,1]^2$. 
	To avoid ambiguity, note the following. The authors in \cite{Weisse2006} refer to their kernel as the \emph{Jackson kernel}, even though in the literature there is another object which is referred to in that name. 
	Therefore, we will refer to the kernel from \cite{Weisse2006} as the \emph{minimum resolution kernel}, reserving the term "Jackson kernel" for the object Jackson used in \cite{Jackson1912} to prove his theorems. 
	We are interested in non-negative trigonometric polynomials with cosine terms only, as these are the ones giving rise to kernels of the form that we want as we have seen in \eqref{trigpol}, \eqref{subst}. It is easy to see that if all sine-terms are zero, then the $c_\nu$ terms are real. Thus, this gives us a way to characterize kernels of the form \eqref{Kr} that are non-negative.  Consider a function of the form
	\[
	p(\varphi) = \left \vert \sum_{\nu=0}^r a_\nu \mathrm{e}^{i\nu \varphi} \right \vert^2,
	\]
	for $a_\nu \in \mathbb{R}$. Rewriting this expression we find
	\begin{equation*}
		\begin{aligned}
		p(\varphi) &= \sum_{\nu, \mu = 0}^{r} a_\nu a_\mu \cos\left( [\mu - \nu]\varphi \right)\\
						&=  \sum_{\nu=0}^{r}a_\nu^2 + \sum_{k=1}^{r} \sum_{\nu = 0}^{r-k}a_\nu a_{\nu+k}\cos (k \varphi) \\
						& = g_0 + \sum_{k = 1}^{r} g_k \cos (k \varphi),
		\end{aligned}
	\end{equation*}
	for
	\begin{equation}\label{gn}
	g_k = \sum_{\nu=0}^{r-k}a_\nu a_{\nu+k}.
	\end{equation}
	Therefore, every set of real numbers $a_0, a_1, \dots, a_r$ satisfying $\sum_{k = 0}^{r}a_k^2 = 1$ gives rise to a kernel of the form \eqref{Kr} satisfying P1, P2, P3 when the $g_k$ are set as in \eqref{gn}. 
	A first idea to construct a kernel would be to set all $a_k = \frac{1}{\sqrt{r+1}}$, to ensure that $g_0 = 1$. 
	This leads to the so called Fej\'er kernel, whose coefficients we denote by 
	\[
		g_k^F = 1-\frac{k}{r+1}.
	\]
	However, this kernel is not optimal in the sense that is does not have minimum resolution. We next take a look at kernels satisfying P1-P3, with minimal resolution $\sigma_r$. Note that for the resolution of the kernel we have
	\[
	\sigma_r^2 = \int_{\mathbf{K}} (x-y)^2K_r(x,y)\mathrm{d}\mu(x)\mathrm{d}\mu(y) = g_0 - g_1.
	\]
	We will formulate an optimization problem to minimize resolution with respect to $a_k$.
	\begin{equation}\label{jacksonproblem}
	\begin{aligned}
		\min &\; g_0 - g_1  & \Leftrightarrow & \quad \min \sum_{k = 0}^r a_k^2 - \sum_{k = 0}^{r-1}a_ka_{k+1}\\
		\text{ s.t. } & g_0 = 1  & &\quad \text{ s.t. }  \sum_{k = 0}^r a_k^2  = 1
	\end{aligned}
	\end{equation}
	Solving this problem results in the minimum resolution kernel mentioned earlier which is called the Jackson kernel in \cite{Weisse2006}, whose coefficients are given by
	\begin{equation}\label{jackson}
		g_{k,r}^{\mathrm{KPM}} = \frac{(r-k+2)\cos \left( \frac{\pi k}{r+2}\right) +\sin \left( \frac{\pi k}{r+2}\right) \cot \left( \frac{\pi}{r+2}\right) }{r+2}.
	\end{equation}
	Thus, we see that $\sigma_r^2 = g_{0,r}^{\mathrm{KPM}} - g_{1,r}^{\mathrm{KPM}} = 1-\cos \left( \frac{\pi}{r+2} \right)  = O(1/r^2)$.

  \subsection{Uniform convergence in terms of $\sigma_r$ in the multivariate case}

	In this subsection we prove we may also bound the rate of convergence in terms of $\sigma_r$ in the multivariate case. 
	We first prove the result for uniformly continuous periodic functions on $[-\pi,\pi]^n$ and then extend the results for the case of uniformly continuous functions on $[-1,1]^n$.
	Recall that the modulus of continuity for a uniformly continuous function $f$ is defined as
	\[
	\omega_f(\delta) := \max_{\stackrel{\textbf{x},\textbf{y} \in \textbf{K}}{\|\textbf{x}-\textbf{y}\| \le \delta}} |f(\textbf{x}) - f(\textbf{y})|.
	\]
	Further, note the following properties of the modulus of continuity
	\begin{enumerate}
		\item\label{itemModCont1} For $\lambda, \delta > 0 : \omega_f(\lambda \delta) \le (1+\lambda)\omega_f(\delta)$ (Lemma 1.3 in \cite{Rivlin69})
		\item\label{itemModCont2} For continuous $f$ and $\delta > 0$ one has $|f(\textbf{x}-\textbf{y})-f(\textbf{x})| \le \left( 1+\frac{1}{\delta^2}||\textbf{y}||^2\right)\omega_{f}(\delta)$  (by proof of Proposition 5.1.5 in \cite{Altomare2011})
	\end{enumerate}
    Let $f$ be uniformly continuous and periodic on $\textbf{B} := [-\pi, \pi]^n$ and let the kernel 
    \begin{equation}\label{trigKernel}
    K_r(\textbf{x}) = 1+\sum_{\alpha \in \mathbb{N}^n_r \setminus \{ 0 \}} 2^{H(\alpha)}g_\alpha \prod_{i = 1}^n \cos \left( \alpha_i x_i \right)
    \end{equation}
    
    be non-negative of degree $r$. Define
    \[
        \mathcal{K}^{(r)}(f)(\textbf{x}) = \frac{1}{(2\pi)^n}\int_{\textbf{B}}f(\textbf{x}-\textbf{y})K_r(\textbf{y}) \mathrm{d}\textbf{y}.
    \]
    
    Further, note that
	\begin{equation}\label{inequality}
    \begin{aligned}
        | \mathcal{K}^{(r)}(f)(\textbf{x})- f(\textbf{x})| &\le \frac{1}{(2\pi)^n} \int_{\textbf{B}} | f(\textbf{x}-\textbf{y})-f(\textbf{x})|K_r(\textbf{y})\mathrm{d}\textbf{y} \\ 
        &\overset{\ast}{\le}  \frac{1}{(2\pi)^n} \int_{\mathbf{B} }  \left(1+\frac{1}{\delta^2}\|\textbf{y}\|^2\right)\omega_f(\delta)  K_r(\textbf{y}) \mathrm{d}\textbf{y} \\
        &= \omega_f(\delta) + \frac{\omega_f(\delta)}{\delta^2} \frac{1}{(2\pi)^n}\int_{\mathbf{B}} \|\textbf{y}\|^2 K_r(\textbf{y}) \mathrm{d}\textbf{y},
    \end{aligned}
\end{equation}
    where $\ast$ follows from the second property of the modulus of continuity. Note that since $-\pi \le y_i \le \pi$ we find
    \begin{align*}
        ||\textbf{y}||^2 &= \sum_{i=1}^n y_i^2 \le  \frac{\pi^2}{2}\sum_{i=1}^n (1-\cos (y_i) ) \,. \\
    \end{align*}

    Thus, 
    \begin{align}
        \int_{\mathbf{B}} \|\textbf{y}\|^2 K_r(\textbf{y}) \mathrm{d}\textbf{y} & \le \frac{\pi^2}{2}\left(\sum_{i=1}^n (2\pi)^n - \sum_{\alpha \in \mathbb{N}^n_r} 2^{H(\alpha)}g_\alpha \int_{\textbf{B}}\cos (y_i) \prod_{i = 1}^n \cos (\alpha_i y_i) \mathrm{d}\textbf{y}\right) \\
        & = (2\pi)^n \frac{\pi^2}{2}\sum_{i=1}^n \left(1-g_{e_i}\right) \,, 
    \end{align}
    where we used the fact that 

    $$\cos (x) \cos (kx)  = \frac{1}{2}\left(\cos ((k-1)x) +\cos ((k+1)x)  \right).$$ 

	Straight-forward calculation shows that $\sum_{i=1}^n (1-g_{e_i}) = \sigma_r^2$.	
    Choosing $\delta = \sqrt{ \frac{\pi^2}{2} \sum_{i=1}^n (1- g_{e_i}}) = \pi / \sqrt{2} \sigma_r$ we find by \eqref{inequality} and the first property of the modulus of continuity:
    \[
        | \mathcal{K}^{(r)}(f)(\textbf{x})- f(\textbf{x})| \le 2\omega_f(\pi / \sqrt{2} \sigma_r) \le 2(1+\pi/\sqrt{2})\omega_f(\sigma_r).
    \]

    \sloppy If $f$ is a continuous function on $[-1,1]^n$ define $g(\boldsymbol{\theta}) = f(\cos (\boldsymbol{\theta}) ) = f(\cos( \theta_1 ), \dots, \cos (\theta_n))$ for $\boldsymbol\theta \in [0,\pi]^n$. 
    Further, define for $\theta_i \in [-\pi,0]$ $g(\boldsymbol\theta) = g(\theta_1, \dots,-\theta_i, \dots, \theta_n)$. Similarly, we may define $g(\boldsymbol\theta)$ for all $\boldsymbol\theta \in [-\pi,\pi]^n$.
    We see $g(\boldsymbol\theta)$ is even and periodic on $[-\pi,\pi]^n$. Since it is even, the convolution with a kernel of the form \eqref{trigKernel} will have only cosine terms.
    The argument is as follows:
    \begin{align*}
       (2\pi)^n &\mathcal{B}^{(r)}(g)(\boldsymbol\theta)  = \int_{\textbf{K}}g(\boldsymbol\theta - \boldsymbol\varphi)K_r(\boldsymbol\varphi)\mathrm{d}\boldsymbol\varphi \\
       & = \int_{\textbf{B}}g(\boldsymbol\varphi)K_r(\boldsymbol\theta - \boldsymbol\varphi)\mathrm{d}\boldsymbol\varphi \\
       &= \int_{\textbf{B}}g(\boldsymbol\varphi)\left(1+\sum_{\alpha \in \mathbb{N}^n_r \setminus \{ 0 \}} 2^{H(\alpha)}g_\alpha \prod_{i = 1}^n \cos \left(\alpha_i (\theta_i - \varphi_i)\right) \right) \mathrm{d}\boldsymbol\varphi. \\
       &= \int_{\textbf{B}}g(\boldsymbol\varphi)\mathrm{d}\varphi + \\
       & \sum_{\alpha \in \mathbb{N}^n_r \setminus \{ 0 \}} 2^{H(\alpha)}g_\alpha \int_{\textbf{B}} g(\boldsymbol\varphi)\prod_{i = 1}^n \left(\cos (\alpha_i\theta_i) \cos (\alpha_i\varphi_i)  + \sin (\alpha_i \theta_i) \sin (\alpha_i \varphi_i) \right)\mathrm{d}\boldsymbol\varphi. \\ 
    \end{align*}
    The integrand in the last integral will be the function $g$ times the sum of products of sine and cosine functions.
    The domain $\mathbf{B} = [-\pi,\pi]^n$ is symmetric and $g$ is even by construction. Hence, every integral containing a sine function will evaluate to zero, since the sine function is odd. 
    We may now assume the approximation will take the following form 
    \begin{equation}
        q_r(\boldsymbol\theta) = \mathcal{K}^{(r)}(g)(\boldsymbol\theta) = a_0 + \sum_{\alpha \in \mathbb{N}^n_r} 2^{H(\alpha)}a_\alpha \prod_{i=1}^n \cos \left( \alpha_i \theta_i\right) \,.
    \end{equation}
    
    Substituting $\theta_i = \arccos (x_i) $ results in a polynomial
    \[
        p_r(\textbf{x}) = a_0 +   \sum_{\alpha \in \mathbb{N}^n_r}a_\alpha T_\alpha(\textbf{x}).
    \]

    This polynomial will serve as an approximation for $f$, and we will bound the absolute error in terms of $\sigma_r$. 
    For this we will need the following Lemma.
    \begin{lemma}\label{omegaLemma}
        For $f,g$ as defined above we have 
        \[
            \omega_g(\delta) \le \omega_f(\delta).
        \]
    \end{lemma}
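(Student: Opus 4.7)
The plan is to unwind the definition of $g$ and reduce the claim to the 1-Lipschitz property of the cosine function. First I would observe that because $\cos$ is an even function, the two-step extension in the paper (first reflecting through each coordinate axis, then extending periodically) is consistent with the single identity
\[
g(\boldsymbol\theta) = f(\cos\theta_1, \dots, \cos\theta_n) \qquad \text{for all } \boldsymbol\theta \in [-\pi,\pi]^n,
\]
and more generally $g(\boldsymbol\theta) = f(\cos\theta_1,\dots,\cos\theta_n)$ on all of $\mathbb{R}^n$ by periodicity. Thus, to estimate $|g(\boldsymbol\theta) - g(\boldsymbol\varphi)|$ it suffices to compare $f$ at the two points $\cos\boldsymbol\theta := (\cos\theta_1,\dots,\cos\theta_n)$ and $\cos\boldsymbol\varphi := (\cos\varphi_1,\dots,\cos\varphi_n)$, both of which lie in $[-1,1]^n$.

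Next I would exploit the fact that $|\cos'(t)| = |\sin t| \le 1$ to obtain the coordinatewise mean value bound $|\cos\theta_i - \cos\varphi_i| \le |\theta_i - \varphi_i|$. Squaring and summing yields
\[
\|\cos\boldsymbol\theta - \cos\boldsymbol\varphi\|^2 = \sum_{i=1}^n (\cos\theta_i - \cos\varphi_i)^2 \le \sum_{i=1}^n (\theta_i - \varphi_i)^2 = \|\boldsymbol\theta - \boldsymbol\varphi\|^2,
\]
so the map $\boldsymbol\theta \mapsto \cos\boldsymbol\theta$ is 1-Lipschitz from $\mathbb{R}^n$ into $[-1,1]^n$.

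From here the lemma follows quickly. Given any $\boldsymbol\theta,\boldsymbol\varphi$ with $\|\boldsymbol\theta-\boldsymbol\varphi\|\le \delta$, the Lipschitz estimate gives $\|\cos\boldsymbol\theta - \cos\boldsymbol\varphi\| \le \delta$, and then by definition of $\omega_f$ and the fact that it is a non-decreasing function of its argument,
\[
|g(\boldsymbol\theta) - g(\boldsymbol\varphi)| = |f(\cos\boldsymbol\theta) - f(\cos\boldsymbol\varphi)| \le \omega_f(\|\cos\boldsymbol\theta - \cos\boldsymbol\varphi\|) \le \omega_f(\delta).
\]
Taking the maximum over all such pairs yields $\omega_g(\delta) \le \omega_f(\delta)$.

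There is no real obstacle in the argument; the only point that requires care is making sure the even/periodic extension is compatible with the identity $g(\boldsymbol\theta) = f(\cos\boldsymbol\theta)$ on the whole of $\mathbb{R}^n$, since otherwise one might worry that two $\boldsymbol\theta$-points close across a periodic boundary could map to $\cos$-points that are not close. Because $\cos$ is itself $2\pi$-periodic and even, this compatibility holds automatically, and the Lipschitz estimate remains valid globally.
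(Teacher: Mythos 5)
Your proof is correct, and since the paper omits this proof entirely ("straight-forward and omitted for the sake of brevity"), your argument supplies exactly the standard reasoning the authors had in mind: the coordinatewise bound $|\cos\theta_i-\cos\varphi_i|\le|\theta_i-\varphi_i|$ makes $\boldsymbol\theta\mapsto\cos\boldsymbol\theta$ $1$-Lipschitz in the Euclidean norm, and the claim follows from the definition and monotonicity of $\omega_f$. Your remark on the compatibility of the even/periodic extension with $g(\boldsymbol\theta)=f(\cos\boldsymbol\theta)$ on all of $\mathbb{R}^n$ is a worthwhile check that the paper glosses over.
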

    \begin{proof}
		The proof is straight-forward and omitted for the sake of brevity.
    \end{proof}

    We have gathered everything required to prove \cref{prop1}.
    
    \begin{proof}[Proof of \cref{prop1}.]
      First, note that
  \begin{align*} 
      \sup_{\textbf{x} \in [-1,1]^n} |f(\textbf{x})-p_r(\textbf{x}) | & \le \sup_{\boldsymbol\theta \in [-\pi,\pi]^n} | g(\boldsymbol\theta)-q_r(\boldsymbol\theta) | \\
      & \le  2(1+\pi/\sqrt{2})\omega_g(\sigma_r)  \\
      & \le  2(1+\pi/\sqrt{2})\omega_f(\sigma_r),
  \end{align*}
  where the last inequality follows by \cref{omegaLemma}.
Above we have obtained a polynomial $p_r$ of degree less than $r$ which approximates $f \in \mathcal{C}(\mathbf{K})$. 
  We did so by using the convolution with a kernel of the form \eqref{trigKernel}. 
  Using the substitution given in \eqref{subst} we may transform the kernel into a positive kernel of the form
  \[
      K_r(\textbf{x},\textbf{y}) = 1+ \sum_{\alpha \in \mathbb{N}^n_r}2^{H(\alpha)}g_\alpha T_{\alpha}(\textbf{x})T_{\alpha}(\textbf{y}).  
  \]
  It is left to show that both approaches are equivalent, i.e., lead to the same approximation of the function $f$. For this note the following.
  The polynomial we obtain via the approximation process defined as per \eqref{convOperator} is 
  \begin{align*}
      \mathcal{K}^{(r)}(f)(\textbf{x})  & = \int_{\mathbf{K}}f(\textbf{y})\mathrm{d}\mu(\textbf{y}) +\sum_{\alpha \in \mathbb{N}^n_r}2^{H(\alpha)}g_\alpha \left(\int_{\mathbf{K}}f(\textbf{y})T_\alpha(\textbf{y})\mathrm{d}\mu(\textbf{y})\right) T_\alpha(\textbf{x}) \\
      & = c_0 + \sum_{\alpha \in \mathbb{N}^n_r} 2^{H(\alpha)} g_\alpha c_\alpha T_\alpha(\textbf{x}),
  \end{align*}
where 
\[
  c_\alpha = \langle f, T_\alpha \rangle_\mu = \int_{\mathbf{K}}f(\textbf{y})T_\alpha(\textbf{y})\mathrm{d}\mu(\textbf{y}).
\]
  We need to check whether we get the same coefficients from both approaches. 
  Recall the approximation from before
  \[
      p_r(\textbf{x}) = a_0 + \sum_{\alpha \in \mathbb{N}^n_r}2^{H(\alpha)}g_\alpha a_\alpha T_\alpha(\textbf{x}),
  \]
  where 
  \[
      a_\alpha = \frac{1}{(2\pi)^n} \int_{\mathbf{B}}g(\boldsymbol\varphi)\prod_{i=1}^n\cos (\alpha_i \varphi_i) \mathrm{d}\boldsymbol\varphi. 
  \]
We would like to show that $a_\alpha = c_\alpha$, i.e.,
  \[
      \int_{\mathbf{K}}f(\textbf{y})\prod_{i=1}^n\frac{\cos (\alpha_i \arccos (y_i) )}{\sqrt{1-y_i^2}}\mathrm{d}\textbf{y} = \frac{1}{2^n} \int_{\mathbf{B}}g(\boldsymbol\varphi)\prod_{i=1}^n\cos (\alpha_i \varphi_i) \mathrm{d}\boldsymbol\varphi.
  \]
  For this note that
  \begin{equation}\label{aalphaProof}
  a_\alpha = \frac{1}{2^n} \int_{\mathbf{B}}g(\boldsymbol\varphi)\prod_{i=1}^n\cos (\alpha_i \varphi_i) \mathrm{d}\boldsymbol\varphi = \int_{[0,\pi]^n}f(\cos \boldsymbol\varphi)\prod_{i=1}^n\cos (\alpha_i \varphi_i) \mathrm{d}\boldsymbol\varphi.		
\end{equation}

  Next we can make use of the following substitution 
  \[
      \varphi_i = \arccos (x_i)  \Rightarrow \mathrm{d}\varphi_i = -\frac{1}{\sqrt{1-x_i^2}}\mathrm{d}x_i. 
  \]
  Finally, using \eqref{aalphaProof} we find 
  \[
       a_\alpha =  \int_{[0,\pi]^n}f(\cos (\boldsymbol\varphi) )\prod_{i=1}^n\cos (\alpha_i \varphi_i) \mathrm{d}\boldsymbol\varphi = \int_{\mathbf{K}}f(\textbf{x})\prod_{i=1}^n\frac{\cos (\alpha_i \arccos (x_i ))}{\sqrt{1-x_i^2}}\mathrm{d}\textbf{x} = c_\alpha.
  \]
  This proves we indeed have that both approaches are equivalent, i.e., they lead to the same approximation. This completes the proof of \cref{prop1}.
\end{proof}

\subsection{Positivstellensatz for the multivariate case}
\label{sec:multivariate KPM}
In this subsection we present a Positivstellensatz for multivariate trigonometric polynomials. This result allows for the construction of semidefinite programs whose solutions provide optimal kernels with respect to $\sigma_r$. 
To this end, recall the identities:
if $p_k(\phi) = \cos (k\phi)$, then, for $x,y \in [-1,1]$,
\begin{eqnarray*}
  p_k(\arccos (x)) &=& T_k(x) \\
  \frac{1}{2}\left(p_k(\arccos (x) + \arccos (y)) + p_k(\arccos (x) - \arccos (y))\right) &=& T_k(x)T_k(y).
\end{eqnarray*}
As a consequence, if we start with  a non-negative multivariate trigonometric polynomial of the form
\[
(\phi_1,\ldots, \phi_n) \mapsto 1+ \sum_{\alpha \in \mathbb{N}^n_r\setminus \{0\}} 2^{H(\alpha)} g_\alpha \prod_{ i \in [n]} \cos (\alpha_i \phi_i),
\]

then replacing each $p_{\alpha_i}(\phi_i) := \cos (\alpha_i \phi_i)$  by
$$\frac{1}{2}\left(p_{\alpha_i}(\arccos (x_i) + \arccos (y_i)) + p_{\alpha_i}(\arccos (x_i) - \arccos (y_i))\right)$$
as in \eqref{subst} (this operation preserves non-negativity), one obtains the non-negative kernel
\[
K(\textbf{x},\textbf{y}) = 1 + \sum_{\alpha \in \mathbb{N}^n_r\setminus \{0\}} 2^{H(\alpha)}g_\alpha \prod_{ i \in [n]} T_{\alpha_i}(x_i)T_{\alpha_i}(y_i) \quad \textbf{x},\textbf{y} \in \mathbf{K}.
\]
In contrast to the polynomial case, each multivariate, positive, trigonometric polynomial is a sum of squares of trigonometric polynomials. (The degrees appearing in the sums-of-squares may be arbitrarily large, though.)

\begin{theorem}[e.g.\ Theorem 3.5 in \cite{Dumitrescu2007}]\label{ThmTrigSDP}
  If $p$ is a positive trigonometric polynomial, then there exists an $r \in \mathbb{N}$ and a hermitian p.s.d. matrix $M$ of order $\binom{n+r}{r}$ such that
  \[
  p(\boldsymbol\phi) = \left[ \exp (\imath \alpha^T\boldsymbol\phi)\right]_{\alpha \in \mathbb{N}^n_r}^*M\left[ \exp (\imath \alpha^T\boldsymbol\phi)\right]_{\alpha \in \mathbb{N}^n_r}.
  \]
\end{theorem}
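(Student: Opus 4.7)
My plan is to reduce the problem to a sum-of-Hermitian-squares decomposition on the complex $n$-torus. First I would make the substitution $z_j := \exp(\imath \phi_j)$, so that $\bar z_j = z_j^{-1}$, and identify the trigonometric polynomial $p(\boldsymbol\phi)$ with a Hermitian Laurent polynomial $P(z,\bar z) = \sum_{\alpha,\beta} c_{\alpha,\beta} z^\alpha \bar z^\beta$ on the compact real variety $\mathbb{T}^n = \{ z \in \mathbb{C}^n : |z_j| = 1 \text{ for all } j\}$. Positivity of $p$ on $\mathbb{R}^n$ then becomes strict positivity of $P$ on $\mathbb{T}^n$.

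Next I would invoke a multivariate Positivstellensatz for the torus (e.g.\ a result of Dritschel or a Putinar-style theorem applied to $\mathbb{T}^n$ viewed as the real algebraic set cut out by $|z_j|^2-1 = 0$) to obtain a decomposition
\[
P(z,\bar z) = \sum_{k=1}^N |q_k(z)|^2 \qquad \text{for all } z \in \mathbb{T}^n,
\]
where the $q_k \in \mathbb{C}[z]$ are analytic polynomials of total degree at most some $r \in \mathbb{N}$. The integer $r$ may exceed the degree of the original $p$, which is the reason the theorem only claims existence of some $r$.

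Finally I would pass to the Gram-matrix form. Expanding $q_k(z) = \sum_{\alpha \in \mathbb{N}^n_r} c_{k,\alpha}\, z^\alpha$, I stack the coefficient vectors $c_k \in \mathbb{C}^{s(n,r)}$ into $M := \sum_{k=1}^N c_k c_k^*$, which is Hermitian positive semidefinite of order $s(n,r) = \binom{n+r}{r}$. Writing $v(\boldsymbol\phi) := [\exp(\imath \alpha^T\boldsymbol\phi)]_{\alpha \in \mathbb{N}^n_r}$ and noting that $z^\alpha = \exp(\imath \alpha^T\boldsymbol\phi)$ on $\mathbb{T}^n$, one checks
\[
v(\boldsymbol\phi)^* M v(\boldsymbol\phi) = \sum_k |c_k^* v(\boldsymbol\phi)|^2 = \sum_k |q_k(z)|^2 = P(z,\bar z) = p(\boldsymbol\phi),
\]
which yields the claimed representation.

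The main obstacle is the middle step, i.e.\ establishing the sum-of-Hermitian-squares decomposition on $\mathbb{T}^n$. In one variable this is classical Fej\'er--Riesz and one even gets a \emph{single} square with $\deg q = \deg p$, but in several variables this single-square form fails and even the sum-of-squares form requires the degree of the $q_k$ to be potentially much larger than the degree of $p$, with no effective bound in terms of the coefficients of $p$ alone. This is precisely the content of the cited theorem from \cite{Dumitrescu2007}, so delegating to that reference is justified, but conceptually it carries the entire analytic weight of the statement.
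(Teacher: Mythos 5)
Your proposal is correct and matches the paper's treatment: the paper gives no proof of its own but simply cites Theorem 3.5 of \cite{Dumitrescu2007}, which is exactly the sum-of-Hermitian-squares decomposition on the torus that you delegate to, and your remaining steps (the substitution $z_j=\exp(\imath\phi_j)$ and the standard Gram-matrix correspondence $M=\sum_k c_kc_k^*$) are the routine bookkeeping that turns that decomposition into the stated matrix form. You also correctly identify why the degree $r$ is not controlled, which is the essential caveat the paper itself notes after the theorem.
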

Again, the value $r$ in the theorem may be arbitrarily large, but for fixed $r$, one may consider the kernel given by solving the SDP:

\begin{equation}
	\label{trig kernel SDP}
	 \sigma^2_r = \min_{g_\alpha \,:\, \alpha \in \mathbb{N}^n_r} \;\sum_{i=1}^n \left( 1 - g_{e_i} \right)
	\end{equation}
	subject to

  \begin{eqnarray*}
		1+ \sum_{\alpha \in \mathbb{N}^n_r\setminus \{0\}} 2^{H(\alpha)}g_\alpha \prod_{ i \in [n]} \cos (\alpha_i \phi_i) &=& \left[ \exp (\imath \alpha^T\boldsymbol\phi)\right]_{\alpha \in \mathbb{N}^n_r}^*M\left[ \exp (\imath \alpha^T\boldsymbol\phi)\right]_{\alpha \in \mathbb{N}^n_r} \\
		M &\succeq & 0.
	\end{eqnarray*}

  \begin{figure}[tbhp]
    \centering 
    \subfloat[$r=30$]{\label{fig:a}\includegraphics[width=0.48\textwidth]{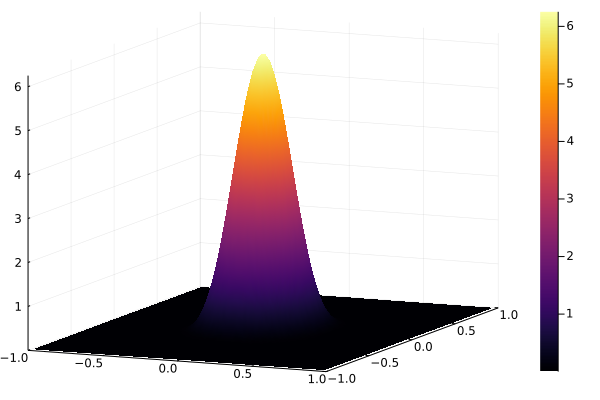}} 
    \subfloat[$r=50$]{\label{fig:b}\includegraphics[width=0.48\textwidth]{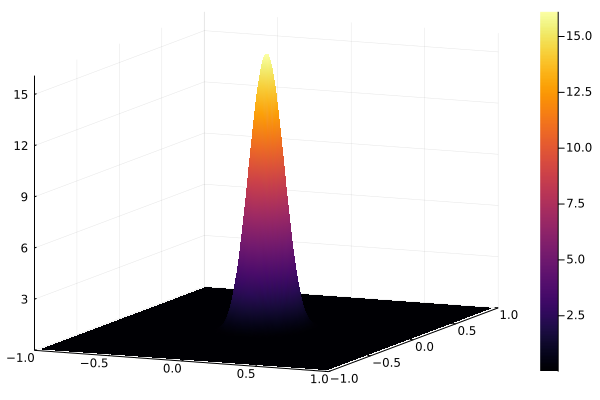}} 
    \caption{Plots of $\mathcal{K}^{(r)}(\delta_{(0,0)})$, i.e. convolution of kernel with minimum resolution $\sigma_r$ with the Dirac-$\delta$ at $(x,y) = (0,0)$ for different values of $r$}
    \label{fig:testfig}
    \end{figure}

  \section{Reformulation of the SDP}
	\label{sec:implementation}
	In this section we present how to find solutions to problem \eqref{trig kernel SDP}. But first we show that the optimal solution to the problem maybe assumed w.l.o.g. to be real. 
	\subsection{Existence of a real solution}
	Let $M = P+ \imath Q$, for $P \in \mathbb{S}^{s(n,r)}$ and $Q$ being skew-symmetric of the same size. Then,
	\begin{align*}
		\left[ \exp (\imath \alpha^T\boldsymbol\phi)\right]_{\alpha \in \mathbb{N}^n_r}^*(P+\imath Q)\left[ \exp (\imath \alpha^T\boldsymbol\phi)\right]_{\alpha \in \mathbb{N}^n_r} = & \sum_{\alpha, \beta \in \mathbb{N}_r^n} \cos\left[(\alpha-\beta)^T\boldsymbol\phi\right] P_{\alpha,\beta} \\
		& - \sum_{\alpha , \beta \in \mathbb{N}_r^n} \sin \left[(\alpha-\beta)^T \boldsymbol\phi \right] Q_{\alpha,\beta},
	\end{align*}
	because of the identities
	\[
	\cos \left(\alpha^T\boldsymbol\phi\right) \cos \left(\beta^T\boldsymbol\phi\right) +\sin \left(\alpha^T\boldsymbol\phi \right) \sin \left(\beta^T\boldsymbol\phi\right)  = \cos\left[(\alpha-\beta)^T\boldsymbol\phi\right]
	\]
	and
	\[
	\cos \left(\alpha^T\boldsymbol\phi\right) \sin \left(\beta^T\boldsymbol\phi \right)-\sin \left(\alpha^T\boldsymbol\phi \right)\cos \left(\beta^T\boldsymbol\phi\right)  = \sin\left[(\alpha-\beta)^T\boldsymbol\phi\right].
	\]
	
	We continue to show we may without loss of generality assume there exists an optimal solution which is real symmetric, i.e., $Q = 0$. Let $P+\imath Q$ be an optimal solution to \eqref{trig kernel SDP} and define 
	\begin{align*}
	G(\phi_1, \dots,\phi_n) &= \sum_{\gamma \in \mathbb{N}^n_r} 2^{H(\gamma)}g_\gamma \prod_{ i \in [n]} \cos \left( \gamma_i \phi_i \right) \\
	&= \sum_{\alpha, \beta \in \mathbb{N}_r^n} \cos\left[(\alpha-\beta)^T\boldsymbol\phi\right] P_{\alpha,\beta} 
	 - \sum_{\alpha , \beta \in \mathbb{N}_r^n} \sin \left[(\alpha-\beta)^T \boldsymbol\phi \right] Q_{\alpha,\beta}
	\end{align*}
	
	Now, since the cosine is even we find $G(\phi_1, \dots,\phi_n) = G(-\phi_1, \dots,-\phi_n)$. So
	\begin{align*}
	G(\phi_1, \dots,\phi_n) &= \frac{1}{2}(G(\phi_1, \dots,\phi_n) + G(-\phi_1, \dots,-\phi_n)) \\
	&= \frac{1}{2}\Big( \sum_{\alpha, \beta \in \mathbb{N}_r^n} \cos\left[(\alpha-\beta)^T\boldsymbol\phi\right] P_{\alpha,\beta}
	 - \sum_{\alpha , \beta \in \mathbb{N}_r^n} \sin \left[(\alpha-\beta)^T \boldsymbol\phi \right] Q_{\alpha,\beta} \\
	 &\quad + \sum_{\alpha, \beta \in \mathbb{N}_r^n} \cos\left[(\beta-\alpha)^T\boldsymbol\phi\right] P_{\alpha,\beta}
	 - \sum_{\alpha , \beta \in \mathbb{N}_r^n} \sin \left[(\beta-\alpha)^T\boldsymbol\phi \right] Q_{\alpha,\beta}\Big) \\
	 & = \sum_{\alpha, \beta \in \mathbb{N}_r^n} \cos\left[(\alpha-\beta)^T\boldsymbol\phi\right] P_{\alpha,\beta}.
	\end{align*}
	
	Therefore, we may subsequently assume that in \eqref{trig kernel SDP} the matrix $M$ is real symmetric.

	\subsection{Equating coefficients via trigonometric identities}
	The standard idea of SOS-optimization is equating coefficients. For convenience, we restate the optimization problem. Recall that it is enough to consider real symmetric matrices.
	
	\begin{equation}\label{trig kernel SDP2}
	\sigma^2_r = \min_{g_\alpha \,:\, \alpha \in \mathbb{N}^n_r} \;\sum_{i=1}^n \left(1 - g_{e_i} \right)
	\end{equation}
	subject to
	\begin{eqnarray}
	1+ \sum_{\gamma \in \mathbb{N}^n_r\setminus \{0\}} 2^{H(\gamma)}g_\gamma \prod_{ i \in [n]} \cos \gamma_i \phi_i &=& \sum_{\alpha,\beta \in \mathbb{N}^n_r}\cos[(\alpha-\beta)^T\boldsymbol\phi]M_{\alpha,\beta} \label{equality} \\
	M &\succeq & 0, \nonumber 
	\end{eqnarray}

	However, the form in which the two representations of the sought polynomial are given does not allow for an immediate construction of the corresponding constraint matrices. The question is, what $\alpha, \beta$ of the right-hand-side end up contributing to a $\gamma$ of the left-hand-side, and in what way. 
	Let for $I \subseteq [n]$ the function $\omega_{I} : \mathbb{R}^n \to \mathbb{R}^n$ be defined as follows
	\[
	\omega_I(\textbf{x})_i = \begin{cases} -x_i, \text{ if } i \in I\\
	x_i, \text{ otherwise}.\end{cases}
	\]
	In other words, $\omega_I$ flips the sign of $x_i$ for all $i \in I$.  Recall the trigonometric identity
	\begin{equation}
	\label{trig:id}
	 2^{H(\textbf{x})} \prod_{i=1}^n \cos (x_i)  = \sum_{I \subseteq [n]} \cos \left[\sum_{i = 1}^n \omega_I(\textbf{x})_i \right],
	\end{equation}
	which may be proved by induction on $n$ by using the well-known identity 
	\[
	\cos (x+y)  = \cos (x) \cos (y) -\sin (x) \sin (y) \,.
	\]
	The identity \eqref{trig:id}
	will allow us to compare coefficients of trigonometric polynomials in
	\eqref{trig kernel SDP2}. On the right-hand side in \eqref{equality} we will find all $\gamma \in \mathbb{Z}^n$ for which there exist $\alpha, \beta \in \mathbb{N}^n_r$, such that $\gamma = \alpha - \beta$. The identity \eqref{trig:id} now tells us that we have to make sure that for a given $\gamma \in \mathbb{N}^n_r$ the following holds 
	\[
	\sum_{\stackrel{\alpha, \beta \in \mathbb{N}^n_r}{\alpha-\beta = \gamma}} M_{\alpha, \beta} = \sum_{\stackrel{\alpha, \beta \in \mathbb{N}^n_r}{\alpha-\beta = \omega_I(\gamma)}} M_{\alpha, \beta} \quad \text{ for all } I \subset [n],
	\]
	since then we can factor out the same sum for each $\omega_I(\gamma)$ and apply identity \eqref{trig:id}. Noting that $\alpha-\beta = -(\beta-\alpha)$ we can construct symmetric constraint matrices. For each $\gamma \in \mathbb{N}^n_r$ let $C^{(\gamma, I)}\in \{0,1\}^{s(n,r)\times s(n,r)}$
	\[
	C^{(\gamma, I)}_{\alpha, \beta} =  \begin{cases} 1, \text{ if } \alpha - \beta = \omega_I(\gamma) \lor \omega_{I^c}(\gamma) \\
	0, \text{ otherwise.}
	\end{cases}
	\]
	These matrices will always be symmetric since if $\alpha-\beta = \omega_I(\gamma)$ then $\beta - \alpha = \omega_{I^c}(\gamma)$. We define $\mathcal{I}$ as a set of subsets of $[n]$ such that no complement $I^c$ of a set $I \in \mathcal{I}$ lies in $\mathcal{I}$ and $\cup_{I \in \mathcal{I}} \{ I , I^c \} = \{ I : I \subseteq [n] \}$.
	With this we can formulate the first set of constraints, i.e.,
	\[
	\langle M, C^{(\gamma,\emptyset)} \rangle = \langle M, C^{(\gamma,I)} \rangle \; \forall I \in \mathcal{I}, \forall \gamma \in \mathbb{N}^n_r.
	\]
	
	Then $g_{\gamma} = \frac{1}{2} \langle M, C^{(\gamma,I)} \rangle$ for any $I \in \mathcal{I}$. Additionally, we need the following. Let 
	\[
	\Gamma_{(n,r)} = \left\{ \gamma \in \mathbb{Z}^n : \exists \alpha, \beta \in \mathbb{N}^n_r\,, \alpha - \beta = \gamma\, \land \, \sum_{i = 1}^n |\gamma_i| > r \right\}, 
	\]
	which leads us to the next set of constraints
	\[
	\langle M, C^{(\gamma,I)} \rangle = 0 \; \forall I \in \mathcal{I}, \forall \gamma \in \Gamma_{(n,r)}.
	\]
	This way we ensure that there will not appear any unwanted terms in the resulting polynomial. Any $\gamma \in \Gamma_{(n,r)}$ will not find the necessary pairs to use identity \eqref{trig:id}. Therefore, we force all such terms to be zero. We can formulate the SDP now as 
	\begin{equation}\label{sdptrigIdent}
	 \sigma^2_r = \min \;\sum_{i=1}^n \left( 1 - \frac{1}{2}\langle M, C^{(e_i, \emptyset)} \rangle \right)
	\end{equation}
	subject to
	\begin{eqnarray*}
		\langle M, C^{(\alpha,\emptyset)}- C^{(\alpha,I)} \rangle &=& 0 \quad \forall I \in \mathcal{I}, \forall \alpha \in \mathbb{N}^n_r\setminus \{ 0 \} \\
		\langle M, C^{(\gamma,I)} \rangle &=& 0 \quad \forall I \in \mathcal{I}, \forall \gamma \in \Gamma_{(n,r)} \\
		\mathrm{Tr}(M) &=& 1\\
		M &\succeq & 0.
	\end{eqnarray*}
	Note that $\mathrm{Tr}(M)=1$ ensures $g_\alpha = 1$ for $\alpha = (0,\dots,0)$.
	
  \section{Symmetry reduction}
	\label{sec:symmetry}
	In this section we will present an approach that exploits existing symmetries in semidefinite programs in order to improve numerical tractability. There has been done research on the exploitation of symmetries in semidefinite programming \cite{Vallentin2009}. There are also results available focusing on symmetry exploitation for semidefinite relaxations of polynomial optimization problems. We refer the reader to \cite{Gatermann2004}, \cite{Riener2013}. The name readily implies that we will use some \emph{symmetry} to \emph{reduce} the size of the SDP. The goal is to set up an equivalent SDP, where we can impose a block diagonal structure on the matrix variable. This is helpful because we then only have to enforce the positive semidefiniteness for the individual blocks instead of the whole matrix. 

  \subsection{Symmetry adapted basis}
	Let $\mathcal{S}_n$ be the symmetric group acting on the variables $x_i$ for $i \in [n]$ by permuting the elements, i.e.,
	\[
	\sigma(x_i) = x_{\sigma(i)} \text{ for } i \in [n], \sigma \in \mathcal{S}_n.
	\]
	
	The action of $\mathcal{S}_n$ may be defined on functions as well in the following way. Let $f : \mathbb{R}^n \rightarrow \mathbb{R}$, then 
	\[
	\sigma(f) = f(\sigma(\textbf{x})),
	\]
	where, if an element $\sigma \in \mathcal{S}_n$ is applied to an $n$-tuple, we define for $\textbf{x} \in \mathbb{R}^n$ 
	
	\[ \sigma(\textbf{x}) = (\sigma(x_1), \sigma(x_2)\dots, \sigma(x_n)) = (x_{\sigma(1)}, x_{\sigma(2)}, \dots, x_{\sigma(n)}), \] 
	i.e., elementwise application.
	We will call a function $f$ invariant under $\mathcal{S}_n$ if $\sigma(f) = f(\textbf{x})$ for all $\sigma \in \mathcal{S}_n$. Note that our kernel is defined over the set $[-\pi,\pi]^n$, which is invariant under the action of $\mathcal{S}_n$. We can also assume without loss of generality that the optimal kernel $K_r$ will be invariant under the action of $\mathcal{S}_n$, meaning that for all coefficients we will have
	\[
	g_\alpha = g_{\sigma(\alpha)} \text{ for all } \sigma \in \mathcal{S}_n, \alpha \in \mathbb{N}^n_r,
	\]

	To see that the optimal kernel will be invariant under $\mathcal{S}_n$, note that all constraints in problem \eqref{trig kernel SDP2} are invariant under $\mathcal{S}_n$. Thus, any optimal solution to \eqref{trig kernel SDP2} can be "symmetrized" using the Reynolds-operator, which is defined as 
	\[
	\mathcal{R}^{\mathcal{S}_n}(f,g) := \frac{1}{|\mathcal{S}_n |} \sum_{\sigma \in \mathcal{S}_n} \sigma (f) \sigma (g).
	\]
	Let $K_r = \sum_{\alpha \in \mathbb{N}^n_r} \tilde{g}_\alpha\prod_{ i \in [n]}\cos \alpha_i \phi_i $ be a feasible solution to \eqref{trig kernel SDP2}, then 
	\[
	\mathcal{R}^{\mathcal{S}_n}(K_r,1) =  \frac{1}{|\mathcal{S}_n|} \sum_{\sigma \in \mathcal{S}_n} \sum_{\alpha \in \mathbb{N}^n_r} \tilde{g}_\alpha\sigma \left(\prod_{ i \in [n]}\cos (\alpha_i \phi_i) \right)
	\]
	is also feasible and will lead to the same objective value. Therefore, we may assume the optimal kernel is invariant under $\mathcal{S}_n$, as well. 
	
	We will continue to summarize how to use the symmetry reduction technique described in \cite[Section 2.4, 4.1]{Riener2013}, without dwelling on details unnecessary for our discussion. 
	Therefore, we will not focus on the technicalities of the construction of the \emph{symmetry adapted basis}, but rather show how it may be used. 
	The idea is as follows. Let $\mathbb{T}[\varphi]_r = \mathbb{T}[\varphi_1, \dots, \varphi_n]_{r}$ be the set of trigonometric polynomials of degree less that $r$. 
	We will define $\mathbb{T}[\varphi]^{\mathcal{S}_n}_{r}$ to be the set of trigonometric polynomials of degree at most $r$ which are invariant under the action of $\mathcal{S}_n$. A basis for $\mathbb{T}[\varphi]_r$ is given by $\{ \exp (\imath \alpha^T\phi) \}_{\alpha \in \mathbb{N}^n_r}$. 
	To exploit the symmetry we will construct a new basis $\mathcal{B}$, which we call the \emph{symmetry adapted basis}. 
	The set $\mathcal{B}$ may be seen as a collection of $k(n,r) \in \mathbb{N}$ sub-bases $\mathcal{B}_i = \{ b_{i_j} \in \mathbb{T}[\varphi]_r  : \text{ for } j \in [k_i] \}$, for some $k_i \in \mathbb{N}$ in the sense that $\mathcal{B} = \{\mathcal{B}_i : i \in [k(n,r)]\}$. 
	In general there are no closed form expressions for $k(n,r)$ and $k_i$ available as functions of $n$ and $r$, but to give some impression of these numbers we provide \cref{comparisonSymRed}. 
	We call $\mathcal{B}$ a basis because
	\[
	\textrm{span}\left\{ \mathcal{R}^{\mathcal{S}_n}(b_{i_l}, b_{i_m}^\ast),\, i\in [k(n,r)],\, l,m \in [k_i] \right\} = \mathbb{T}[\varphi]_r^{\mathcal{S}^n}.
	\]
	The basis $\mathcal{B}$ has the property that its elements are pairwise orthogonal in the sense that for $b_{i_l} \in \mathcal{B}_i, b_{j_m} \in \mathcal{B}_j$ with $i \neq j$ the symmetrized product is zero, i.e.,
	\[
	\mathcal{R}^{\mathcal{S}_n}(b_{i_l},b_{j_m}^\ast) = 0.
	\]
	
	Before, we were interested in suitable kernels that could be written as 
	\[
	K_r = \left[ \exp (\imath \alpha^T\phi)\right]_{\alpha \in \mathbb{N}^n_r}^*M\left[ \exp (\imath \alpha^T\phi)\right]_{\alpha \in \mathbb{N}^n_r},
	\]
	where $M \in \mathbb{S}^{s(n,r)}_{\succeq 0}$. Knowing that the optimal $K_r$ is invariant under $\mathcal{S}_n$, we can write 
	\[
	K_r = \sum_{i = 1}^{k(n,r)} [b_{i,j}]_{j \in [k_i]}M^{(i)}[b_{i,j}]_{j \in [k_i]}
	\]
	for $M^{(i)}\succeq 0$ for all $i \in [k(n,r)]$. The pairwise orthogonality of $\mathcal{B}$ means that we can consider a block diagonal matrix
	\[
	\begin{bmatrix}
	M^{(1)} & 0 & 0 & \dots & 0 \\
	0 & M^{(2)} & 0 & \dots & 0 \\
	0 & 0 & \ddots & \ddots  & \vdots \\
	\vdots & \vdots & \ddots & \ddots & 0 \\
	0 & 0 & \dots & 0 & M^{(k(n,r))}
	\end{bmatrix}
	\]
	with $M^{(i)} \in \mathbb{S}^{k_i}_{\succeq 0 }$ in our SDP. The computational advantage is that we only have to ensure the positive semidefiniteness of the individual blocks, instead of the much larger matrix $M$. 
	
\begin{example}
  Consider the following example for a symmetry adapted basis with $n=2,r=2$. In this case $k(n,r) = k(2,2) = 2$ and $k_1 = 4, k_2 = 2$. 
	For a corresponding SDP we would have to consider two psd blocks of sizes $4$ and $2$ instead of one psd matrix of size $6\times6$. 
	For $\mathcal{B}_1, \mathcal{B}_2$ we find
		\begin{align*}
			\mathcal{B}_1 = \big \{ & \exp(\imath\,(0 \varphi_1+0 \varphi_2))  = 1, \\
			& \exp(\imath\,(\varphi_1+ \varphi_2)), \\
			& \exp(\imath\, \varphi_1) + \exp(\imath\, \varphi_2), \\ 
			& \exp(\imath \, 2 \varphi_1) + \exp(\imath \, 2 \varphi_2) \big \}
		\end{align*}
	%
		\begin{align*}
			\mathcal{B}_2 = \big \{ 
			& \exp(\imath\, \varphi_1) - \exp(\imath\, \varphi_2), \\ 
			& \exp(\imath \, 2 \varphi_1) - \exp(\imath \, 2 \varphi_2) \big \}.
		\end{align*}
  \end{example}

\subsection{Construction of the symmetry adapted SDP}
	For $\alpha \in \mathbb{N}^n_r$ we define the corresponding \emph{orbit} as $\mathcal{O}_\alpha = \{ \sigma(\alpha) :  \sigma \in \mathcal{S}_n \}$. For each orbit we choose a representative $\alpha$ which is sorted, i.e., $\alpha_1 \le \alpha_2 \le \dots \le \alpha_n$ and index the orbit by that $\alpha$. We define $S(\mathbb{N}^n_r) = \{ \alpha \in \mathbb{N}^n_r : \alpha_1 \le \alpha_2 \le \dots \le \alpha_n \}$ to be the set of representatives for the set of orbits. The set $\mathbb{N}^n_r$ can be written as the union of orbits, i.e.,
	\[
	\mathbb{N}^n_r = \bigcup\limits_{\alpha \in S(\mathbb{N}^n_r)} \mathcal{O}_\alpha.
	\]
	The invariance of $K_r$ means that $g_\alpha = g_\beta$ for every $\beta \in \mathcal{O}_\alpha$. We are now equipped to reformulate \eqref{trig kernel SDP2} as an equivalent optimization problem which is easier to solve. We first note that for the invariant kernel we have
	\begin{equation*}
		\begin{aligned}
			\sum_{\alpha \in \mathbb{N}^n_r} 2^{H(\alpha)}g_\alpha \prod_{i = 1}^n \cos (\alpha_i \varphi_i) & = \sum_{\alpha \in S(\mathbb{N}^n_r)} 2^{H(\alpha)} g_\alpha \left( \sum_{\beta \in \mathcal{O}_\alpha} \prod_{i = 1}^n \cos (\beta_i \varphi_i)  \right).
		\end{aligned}
	\end{equation*}
	
	For every $\mathcal{B}_i \in \mathcal{B}$ we define a matrix $M^{(i)}$ of size $k_i \times k_i$ with $k_i = |\mathcal{B}_i|$. Then the program may be written as follows.
	
	\begin{equation}
	\sigma^2_r = \min \; n \left(1 - g_{e_n}\right)
	\end{equation}
	subject to
	\begin{eqnarray*}
		\sum_{\alpha \in S(\mathbb{N}^n_r)} 2^{H(\alpha)} g_\alpha \left( \sum_{\beta \in \mathcal{O}_\alpha} \prod_{i = 1}^n \cos (\beta_i \phi_i)  \right) &=& \sum_{i=1}^{k(n,r)} \sum_{j,\ell=1}^{k_i} \mathcal{R}^{\mathcal{S}_n}(b_{i_j},b_{i_\ell}^\ast) M^{(i)}_{j,\ell} \\
		M^{(i)} &\succeq & 0, \; i \in [k].
	\end{eqnarray*}
	
	Let us take a closer look at the terms $\mathcal{R}^{\mathcal{S}_n}(b_{i_l},b_{i_m}^\ast)$. Assume that the elements $b_{i_j} \in \mathcal{B}_i$ are given in the following form 
	\[
	b_{i_j} = \sum_{m = 1}^{k_{i_j}} b_{i_j}^{(m)} \exp\left(\imath \left(\alpha^{(m)}\right)^T \boldsymbol\phi \right).
	\]
	
	Then, 
	\begin{equation*}
		\begin{aligned}
			\mathcal{R}^{\mathcal{S}_n}(b_{i_j},b_{i_\ell}^\ast)
      & = \frac{1}{|\mathcal{S}_n|}\sum_{\sigma \in \mathcal{S}_n} \sigma(b_{i_j})\sigma(b_{i_\ell}^\ast) \\
			&= \frac{1}{|\mathcal{S}_n|}\sum_{\sigma \in \mathcal{S}_n} \sum_{m = 1}^{k_{i_j}}\sum_{p = 1}^{k_{i_\ell}}b_{i_j}^{(m)} b_{i_\ell}^{(p)}\exp\left(\imath \left(\alpha^{(m)}\right)^T \sigma(\boldsymbol\phi) +\imath \left(\beta^{(p)}\right)^T \sigma(\boldsymbol\phi) \right)^\ast \\
			& = \frac{1}{|\mathcal{S}_n|}\sum_{\sigma \in \mathcal{S}_n} \sum_{m = 1}^{k_{i_j}}\sum_{p = 1}^{k_{i_\ell}}b_{i_j}^{(m)} b_{i_\ell}^{(p)} \cos[\sigma(\alpha^{(m)}-\beta^{(p)})^T \boldsymbol\phi].
		\end{aligned}
	\end{equation*}
	
	Recalling the trigonometric identity \eqref{trig:id} from before, we can now construct the constraint matrices. Let $\gamma \in S(\mathbb{N}^n_r)$. For each $i \in [k(n,r)], I \in \mathcal{I}$ for $\mathcal{I}$ as in the previous subsection we define
	\[
	\left(C_i^{(\gamma, I)}\right)_{j, \ell} = \begin{cases}
	c(\gamma, I, i, j, \ell)), \text{ if } \omega_I(\gamma) \text{ or } \omega_{I^c}(\gamma) \text{ occurs in } 	\mathcal{R}^{\mathcal{S}_n}(b_{i_j},b_{i_\ell}^\ast) \\
	0, \text{ otherwise},
	\end{cases}
	\]
	where 
	\[
	c(\gamma, I, i, j, \ell))= \frac{1}{|\mathcal{S}_n |} \sum_{\stackrel{m,p \in [k_i]}{\alpha^{(m)}-\beta^{(p)} = \pm \omega_I(\gamma)}} b_{i_j}^{(m)}b_{i_\ell}^{(p)}.
	\]
	As before, we must ensure that for all $I \in \mathcal{I}$ we have that the corresponding coefficients in our resulting polynomial are equal. Therefore, we arrive at the set of constraints 
	\[
	\sum_{i = 1}^{k(n,r)} \langle M^{(i)}, C^{(\gamma, \emptyset)}_i - C^{(\gamma, I)}_i \rangle = 0\,, \text{ for every } \gamma \in S(\mathbb{N}^n_r), I \in \mathcal{I}, I \neq \emptyset.  
	\]
	We will now define the set $S\Gamma_{(n,r)} = \Gamma_{(n,r)} / \mathcal{S}_n$, which is the "symmetry adapted" version of $\Gamma_{(n,r)}$ of the previous subsection, where we factored out all permutations $\sigma \in \mathcal{S}_n$ of a reference element $\gamma$ except the identity. This leads to the constraint set
	\[
	\sum_{i = 1}^{k(n,r)} \langle M^{(i)}, C^{(\gamma, I)}_i \rangle = 0\,, \text{ for every } \gamma \in S\Gamma_{(n,r)}, I \in \mathcal{I}.  
	\]
	The resulting SDP reads as follows. 
	
	\begin{equation}\label{sdpsymred}
	\sigma^2_r = \min \;n \left( 1 - \frac{1}{2} \sum_{i=1}^{k(n,r)}\langle M^{(i)}, C^{(e_n, \emptyset)}_i \rangle \right)
	\end{equation}
	subject to
	\begin{eqnarray*}
		\sum_{i = 1}^{k(n,r)} \langle M^{(i)}, C^{(\gamma, \emptyset)}_i - C^{(\gamma, I)}_i \rangle &=& 0\,, \text{ for every } \gamma \in S(\mathbb{N}^n_r) / \{(0,\dots,0)\}, I \in \mathcal{I}, I \neq \emptyset \\
		\sum_{i = 1}^{k(n,r)} \langle M^{(i)}, C^{(\gamma, I)}_i \rangle &=& 0\,, \text{ for every } \gamma \in S\Gamma_{(n,r)}, I \in \mathcal{I} \\
		\sum_{i = 1}^{k(n,r)} \langle M^{(i)}, C_i^{((0,\dots,0),\emptyset)} \rangle &=& 1\\
		M^{(i)} &\succeq & 0\; \text{ for all } i \in [k(n,r)].
	\end{eqnarray*}
	
	The efficiency of using this symmetry reduction increases when $n$ grows, since the underlying group is $\mathcal{S}_n$. 
	Fixing $n$ and increasing the degree $r$ the size of the underlying program still grows fast. 
	It is also possible to use software for the symmetry reduction, such as the Julia package SDPSymmetryReduction.jl\footnote{see https://github.com/DanielBrosch/SDPSymmetryReduction.jl} which is based on the paper \cite{Brosch2022} by Brosch and de Klerk. 
	This software takes as input a semidefinite program and numerically performs a symmetry reduction without any need to specify the underlying group. 
	The advantage that comes with this is that there is no need for the construction of a specific symmetry adapted basis. 
	But even for small $n$, if $r$ becomes too large the resulting optimization problem becomes numerically unstable. 
	It is still worthwhile to compare the two approaches. 
	The block sizes which are returned by the software are the same as the ones we obtained by our approach presented in this section.
	This suggests that the symmetry is fully exhausted by the symmetric group $\mathcal{S}_n$. 

	To give some insights into the efficiency of the symmetry reduction, we present in \cref{comparisonSymRed} a comparison of the sizes and numbers of the blocks of the symmetry reduced program vs. the size of the SDP matrix in the case without symmetry reduction, i.e. program \eqref{sdptrigIdent}. 
	
	\begin{table}
		\centering
		\scalebox{1.0}{
			\begin{tabular}{ |c|c||c|c|c| }
			\hline
			$n$& $r$& $k(n,r)$ & $k_1, \dots, k_{k(n,r)}$ & $s(n,r)$ \\
			\hline
			\hline
			2 & 1& 2&2 , 1 ,  & 3 \\ 
\hline 
$2$ & $1$& $2$&$2$, $1$   & $3$ \\ 
\hline
$2$ & $2$& $2$&$4$, $2$   & $6$ \\ 
\hline 
$2$ & $3$& $2$&$6$, $4$   & $10$ \\ 
\hline
$2$ & $4$& $2$&$9$, $6$   & $15$ \\ 
\hline 
$2$ & $5$& $2$&$12$, $9$   & $21$ \\ 
\hline 
$2$ & $6$& $2$&$16$, $12$   & $28$ \\ 
\hline 
$2$ & $7$& $2$&$20$, $16$   & $36$ \\ 
\hline 
$2$ & $8$& $2$&$25$, $20$  & $45$ \\ 
\hline
$2$ & $9$& $2$&$30$, $25$   & $55$ \\ 
\hline 
$2$ & $10$& $2$&$36$, $30$   & $66$ \\ 
\hline 
\hline 
$3$ & $1$& $2$&$2$, $1$  & $4$ \\ 
\hline 
$3$ & $2$& $2$&$4$, $3$  & $10$ \\ 
\hline 
$3$ & $3$& $3$&$7$, $6$, $1$   & $20$ \\ 
\hline 
$3$ & $4$& $3$&$11$, $11$, $2$   & $35$ \\ 
\hline 
$3$ & $5$& $3$&$16$, $18$, $4$  & $56$ \\ 
\hline 
$3$ & $6$& $3$&$23$, $27$, $7$  & $84$ \\ 
\hline 
$3$ & $7$& $3$&$31$, $39$, $11$  & $120$ \\ 
\hline 
$3$ & $8$& $3$&$41$, $54$, $16$   & $165$ \\ 
\hline 
$3$ & $9$& $3$&$53$, $72$, $23$   & $220$ \\ 
\hline 
$3$ & $10$& $3$&$67$, $94$, $31$   & $286$ \\ 
\hline 
\hline 
$4$ & $1$& $2$&$2$, $1$  & $5$ \\ 
\hline 
$4$ & $2$& $3$&$4$, $3$, $1$   & $15$ \\ 
\hline 
$4$ & $3$& $4$&$7$, $7$, $2$, $1$   & $35$ \\ 
\hline 
$4$ & $4$& $4$&$12$, $13$, $5$, $3$   & $70$ \\ 
\hline 
$4$ & $5$& $4$&$18$, $23$, $9$, $7$  & $126$ \\ 
\hline 
$4$ & $6$& $5$&$27$, $37$, $16$, $13$, $1$  & $210$ \\ 
\hline 
$4$ & $7$& $5$&$38$, $57$, $25$, $23$, $2$   & $330$ \\ 
\hline 
$4$ & $8$& $5$&$53$, $83$, $39$, $37$, $4$   & $495$ \\ 
\hline 
$4$ & $9$& $5$&$71$, $118$, $56$, $57$, $7$  & $715$ \\ 
\hline
$4$ & $10$& $5$&$94$, $162$, $80$, $83$, $12$  & $1001$ \\ 
\hline 
\hline 
$5$ & $1$& $2$&$2$ , $1$ ,  & $6$ \\ 
\hline 
$5$ & $2$& $3$&$4$, $3$, $1$  & $21$ \\ 
\hline 
$5$ & $3$& $4$&$7$, $7$, $3$, $1$  & $56$ \\ 
\hline 
$5$ & $4$& $5$&$12$, $14$, $7$, $3$, $1$  & $126$ \\ 
\hline 
$5$ & $5$& $5$& $19$, $25$, $14$, $8$, $3$ & $252$ \\ 
\hline 
$5$ & $6$& $6$&$29$, $42$, $26$, $16$, $7$, $1$ & $462$ \\ 
\hline 
$5$ & $7$& $6$&$42$ , $67$, $44$, $30$, $14$, $3$  & $792$ \\ 
\hline 
$5$ & $8$ & $6$ & $60$, $102$, $71$, $51$, $26$, $7$  & $1287$ \\ 
\hline 
$5$ & $9$ & $6$ & $83$, $150$, $109$, $83$, $44$, $14$  & $2002$ \\ 
\hline 
$5$ & $10$ & $7$ & $113$, $214$, $162$, $128$, $71$, $25$, $1$ & $3003$ \\ 
\hline 
\end{tabular}
}
\caption{Comparison for size of SDP \eqref{sdpsymred} and \eqref{sdptrigIdent} for different values of $n$ and $r$.}\label{comparisonSymRed}
\end{table}

\subsection*{Further structure exploitation}
We would like to point out that besides symmetry reduction another useful tool for solving large (semidefinite) optimization problems can be sparsity exploitation. In the context of polynomial optimization this topic has been studied before and results are available (cf. \cite{Lasserre2006, Wang2021}). Since our symmetry reduction works well for large $n$ but not necessarily for large $r$ it would be interesting to study sparsity patterns in our context. Our problem after symmetry reduction as given in \eqref{sdpsymred} is not sparse. However, one could enforce certain sparsity patterns on the data matrices while ensuring the problem stays feasible. This could lead to a more tractable optimization problem, whose solution may be less accurate but could still lead to decent approximation results. All theoretical guarantees on the speed of convergence will be lost in this case, though.

\section{Comparison to products of univariate minimum resolution kernels}
	\label{sec:comparison to univariate Jackson}
	In the following we will have a look at what kernels we get when we take the shortcut and multiply univariate kernels instead of solving the corresponding SDP. The clear advantage is that some optimal univariate kernels are available in closed form. Generating kernels as products means solving the corresponding SDP is unnecessary. Recall that in the univariate case kernels of the form 
	
	\begin{equation}\label{KKPM}
	K_r^{\mathrm{KPM}}(x,y) = 1+ 2\sum_{k = 1}^r g_{(k,r)}^{\mathrm{KPM}} T_k(x)T_k(y),
	\end{equation}
	for $g_{k,r}^{\mathrm{KPM}}$ as in \eqref{jackson}, have minimum resolution $\sigma_r$. The product of $n$ univariate degree $r$ kernels of the form \eqref{KKPM} results in an $n$-variate kernel of degree $nr$ that is feasible for the optimization problem \eqref{sdpsymred}. A natural question is to ask how these kernels compare to the ones obtained by solving the SDP. Consider the product of $n$ degree $r$ kernels
	
	 \[
		\prod_{i = 1}^n \left(1+ 2\sum_{k = 1}^r g_{k,r}^{\mathrm{KPM}} T_k(x_i)T_k(y_i)\right) = 
		\sum_{\stackrel{\alpha \in \mathbb{N}^n_{nr}}{\alpha_i \le r, i \in [n]}} 2^{H(\alpha)}\tilde{g}_{\alpha}^{\mathrm{KPM}} T_{\alpha}(\textbf{x})T_{\alpha}(\textbf{y}),
	\]
	where
	\[
	\tilde{g}_{\alpha}^{\mathrm{KPM}} = \prod_{i = 1}^n g_{\alpha_i,r}^{\mathrm{KPM}}
	\]
	
	We know the resolution is 
	\[
	\sigma^2_r = \sum_{i=1}^n (1-g_{e_i}) = n(1-g_{e_1}). \]
	Thus, we can generate a feasible $n$-variate kernel with a degree $nr$ multiplying $n$ univariate degree $r$ kernels with minimum resolution and the corresponding resolution is 
	\[
	\sigma_{nr,\mathrm{KPM}}^2 = n\left(1-g_{1,r}^{\mathrm{KPM}}\right) \approx \frac{n\pi^2}{2(r+2)^2}.
	\]
	We would expect these to have a worse resolution than the kernels we obtain via solving the SDP where $\sigma_r^2$ is minimized. The reason for this is that the product kernels would be feasible to the same SDP with the additional set of constraints
	\[
	g_\alpha = 0 \text{ for all } \alpha \in \mathbb{N}^n_{nr} \text{ with } \alpha_i > r \text{ for some } i \in [n].
	\]
	In particular, we have the following result.
	
	\begin{proposition}\label{propConvRate}
		Fix $n \in \mathbb{N}$. For $ r \ge n$ we have
	\[
	\sigma^2_r \le n\left( 1- \cos \left( \frac{n\pi}{r+n}\right) \right) \sim \frac{n^3\pi^2}{2(r+n)^2} \mbox{ if $r \gg 0$}.
	\]
	
	\end{proposition}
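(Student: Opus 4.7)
The plan is to exhibit, for each $r \ge n$, a feasible point for the SDP \eqref{sdpsymred} whose resolution attains the stated bound, since $\sigma_r^2$ is the minimum over all feasible kernels. The natural candidate is the product construction discussed just before the proposition: choose the integer $m = \lfloor r/n \rfloor$ (which is at least $1$ because $r \ge n$) and form
\[
\tilde{K}(\textbf{x},\textbf{y}) := \prod_{i=1}^n K_m^{\mathrm{KPM}}(x_i,y_i),
\]
where $K_m^{\mathrm{KPM}}$ is the univariate minimum-resolution kernel from \eqref{KKPM} with coefficients \eqref{jackson}. This product has total degree $nm \le r$, is pointwise non-negative as a product of non-negative univariate kernels, and integrates against $\mu$ to $1$ because each factor satisfies P3. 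Therefore $\tilde{K}$ is feasible for the degree-$r$ multivariate problem (after padding the missing high-degree coefficients with zeros).

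Next I would read off the resolution of $\tilde{K}$ from the calculation performed in the paragraphs leading up to the proposition. The $e_i$-th coefficient of the product equals $g_{1,m}^{\mathrm{KPM}}$ for every $i$, so
\[
\sigma^2(\tilde{K}) \;=\; \sum_{i=1}^n \bigl(1 - g_{e_i}\bigr) \;=\; n\bigl(1 - g_{1,m}^{\mathrm{KPM}}\bigr) \;=\; n\Bigl(1-\cos\tfrac{\pi}{m+2}\Bigr),
\]
using the closed-form value $g_{1,m}^{\mathrm{KPM}} = \cos(\pi/(m+2))$ derived in \cref{sec:construction optimal kernels}.

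The final step is to bound $\pi/(m+2)$ by $n\pi/(r+n)$. Since $m \ge r/n - 1$, we have $m + 2 \ge (r+n)/n$, so $\pi/(m+2) \le n\pi/(r+n)$, and both angles lie in $(0,\pi/2]$ whenever $r \ge n$. Because $1 - \cos$ is monotonically increasing on $[0,\pi]$, it follows that
\[
\sigma_r^2 \;\le\; \sigma^2(\tilde{K}) \;\le\; n\Bigl(1-\cos\tfrac{n\pi}{r+n}\Bigr),
\]
which is the claimed inequality. The asymptotic assertion then follows immediately from $1-\cos x = x^2/2 + O(x^4)$ as $x \to 0$, which yields $n(1 - \cos(n\pi/(r+n))) \sim n^3\pi^2/(2(r+n)^2)$ for $r \gg 0$.

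There is no real obstacle in this argument: the only nontrivial observation is that taking univariate kernels of individual degree $\lfloor r/n \rfloor$ (rather than degree $r$) produces a feasible multivariate kernel of degree at most $r$, after which the bound reduces to monotonicity of $1-\cos$ and the closed-form resolution of the univariate KPM kernel.
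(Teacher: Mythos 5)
Your proposal is correct and follows essentially the same route as the paper: both arguments exhibit the product of $n$ univariate minimum-resolution kernels of individual degree $\lfloor r/n\rfloor$ as a feasible point for the degree-$r$ problem, use the closed form $g_{1,m}^{\mathrm{KPM}}=\cos(\pi/(m+2))$ to compute its resolution $n(1-\cos(\pi/(m+2)))$, and conclude by monotonicity of $1-\cos$ together with $m+2\ge (r+n)/n$. The paper phrases the feasibility step via the monotonicity $\sigma_r^2\le\sigma_{nk}^2\le\sigma_{nk,\mathrm{KPM}}^2$ rather than zero-padding, but this is the same argument.
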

	\begin{proof}
		
		Clearly, $\sigma_r^2 \le \sigma_{r-1}^2$ for any $r \ge 1$. Let now $k \in \mathbb{N}$ be such that $kn \le r \le (k+1)n$. Then we find
		\begin{align*}
			\sigma_r^2 \le \sigma_{nk}^2 \le \sigma_{nk,\mathrm{KPM}}^2 = n\left( 1 - \cos \left( \frac{\pi}{k+2} \right)  \right) \le n\left( 1 - \cos \left( \frac{\pi}{\frac{r}{n}+1}\right) \right) \sim \frac{n^3\pi^2}{2(r+n)^2},
		\end{align*}
		for $r \gg 0$.
	\end{proof}

	Looking at \cref{tab:ResultsComparison} we find that the values
	$\sigma_{r,\mathrm{KPM}}$ are larger than $\sigma_r$. 
	
	\begin{table}[ht]
		\centering
		\scalebox{0.98}{
		  \begin{tabular}{ |c||c|c|c|c|c|c| }
		  \hline
		   & \multicolumn{2}{|c|}{$n = 2$} & \multicolumn{2}{|c|}{$n = 3$} & \multicolumn{2}{|c|}{$n = 4$}\\
		  \hline
		  $r$& $\sigma^2_r $ & $\sigma^2_{r,\mathrm{KPM}}$ & $\sigma^2_r $ & $\sigma^2_{r,\mathrm{KPM}}$ & $\sigma^2_r $ & $\sigma^2_{r,\mathrm{KPM}}$ \\
		  \hline
		  \hline
		  1 & 1.5  & - & 2.5  & - & 3.5 & - \\
		  \hline
		   2 & 1  & 1 & 2 & - & 2.9310  & -\\
		   \hline
		   3 & 0.7378 & -  & 1.5 & 1.5 &  2.4561 & - \\
		   \hline
		   4 & 0.5487  & 0.5858 & 1.1823  & - &  1.9948 & 2 \\
		   \hline
		  5 & 0.4260  & - & 0.9451  & - & 1.6354  & -  \\
		  \hline
		   6 & 0.3395 & 0.3820 & 0.7764 & 0.8787 & 1.3605 & -  \\
		   \hline
		  7 & 0.2774  & - & 0.6474  & - & 1.1518  & -   \\
		  \hline
		  8 & 0.2299  & 0.2679 & 0.5461  & - & 0.9901& 1.1716\\
		  \hline
		  9 & 0.1939  & - & 0.4692  & 0.5729 & 0.8584 & - \\
		  \hline
		  10 & 0.1655  & 0.1981 & 0.4062  & - & 0.7524 & - \\
		  \hline
		  11 & 0.1431  & - & 0.3556  & - & 0.6648  & - \\
		  \hline
		  12 & 0.1248 & 0.1522 & 0.3136 & 0.4019 & 0.5917 & 0.7639  \\
		  \hline
		  13 & 0.1099 & - & 0.2787  & - & 0.5299  & -  \\
		  \hline
		  14 & 0.0975  & 0.1206 & 0.2493  & - &-  & -  \\
		  \hline
		  15 & 0.0871& - & 0.2243  & 0.2971 & -  & - \\
		  \hline
		  16 & 0.0782 & 0.0979 & 0.2028 & - & -  & 0.5359 \\
		  \hline
		  17 & 0.0706  & - & 0.1843 & - & -  & -\\
		  \hline
		  18 & 0.0641  & 0.0810 &0.1682 & 0.2284 & -  & - \\
		  \hline
		  19 & 0.0585  & - & 0.1541 & - & -  & -\\
		  \hline
		  20 & 0.0535  & 0.0681 & 0.1417  & - &   - & 0.3961  \\
		   \hline
		   21 & 0.0492 & - & 0.1307 & 0.1809 & - & -  \\
		  \hline
		   22 & 0.0453 & 0.0581 & 0.1209  & - & -  & - \\
		   \hline
		  23 & 0.0419  & - & - &  - &  -& -  \\
		  \hline
		  24 & 0.0389  & 0.0501 & - & 0.1468 &- & 0.3045  \\
		  \hline
		  25 & 0.0362  & - & - &  - & -  & - \\
		   \hline
		  30 & 0.0261  & 0.0341 & -& 0.1022&- & - \\
		  \hline
		  35 & 0.0197  & - &- & - & -&-\\
		  \hline
		  40 & 0.0154  & 0.0204 & - & -&- &0.1363 \\
		  \hline
		  45 & 0.0123 &- &- & 0.0511 & -&- \\
		  \hline
		  50 & 0.0101 & 0.01352 & -&- &- &- \\
		  \hline
		\end{tabular}
		}
		\caption{Computational results for $\sigma_r^2$ and $\sigma_{r,\mathrm{KPM}}^2$ for different values of $n$ and $r$ obtained by solving \eqref{sdpsymred}. }\label{tab:ResultsComparison}
	  \end{table} 
	
	Therefore, our generalization of the minimum resolution kernels to the multivariate case leads to better approximations than simply multiplying univariate kernels together. Also, for large $n$, i.e., the case for which our symmetry reduction is efficient, multiplying identical univariate kernels together is not always a feasible approach as the degree is always a multiple of $n$. In \cref{compProductJackson1} and \cref{compProductJackson2} we compare the errors of the approximation via the products of Jackson kernels with the approximation via minimum resolution for two different functions.

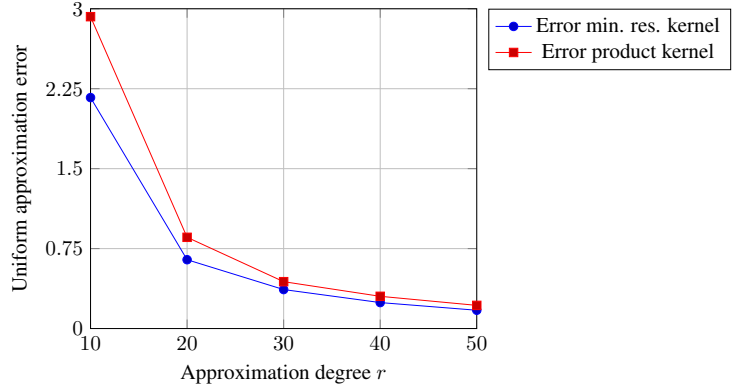
\begin{figure}
	\centering
			\begin{tikzpicture}[scale=0.8]
			\begin{axis}[
			xlabel={Approximation degree $r$},
			ylabel={Uniform approximation error},
			ymajorgrids,
			xmajorgrids,
			xmin=10, xmax=50,
			ymin=0, ymax=3.0,
			xtick={10,20,30,40,50},
			ytick={0,0.75,1.5,2.25,3.0},
			legend style ={ at={(1.03,1)}, 
			anchor=north west, draw=black, 
			fill=white,align=left},
			]
			\addplot 
			coordinates {
				(10,2.1668)(20,0.6458)(30,0.3658)(40,0.2437)(50,0.1717)	};
			\addlegendentry{Error min. res. kernel}; 
			\addplot 
			coordinates {
				(10,2.9258)(20,0.8559)(30,0.4397)(40,0.3024)(50,0.2174)	};
			\addlegendentry{Error product kernel}; 
			\end{axis}
			\end{tikzpicture}
			\caption{Comparison of uniform approximation errors of several approximations of the function $q(\textbf{x}):= x_2 \sin (2\pi x_1)$. We plotted the errors for the kernel with minimal resolution $\sigma_{r}$ and for the product of two univariate degree $r/2$ kernels, i.e. $K_{r/2}^{\mathrm{KPM}}$ as in \eqref{KKPM}.}\label{compProductJackson1}
		\end{figure}

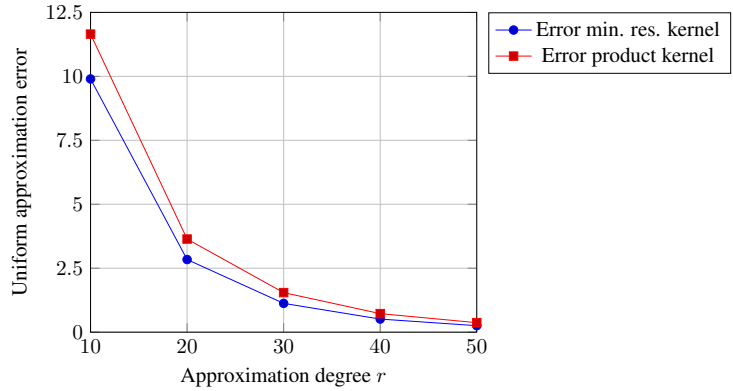
\begin{figure}
	\centering
			\begin{tikzpicture}[scale=0.8]
				\begin{axis}[
				xlabel={Approximation degree $r$},
				ylabel={Uniform approximation error},
				ymajorgrids,
				xmajorgrids,
				xmin=10, xmax=50,
				ymin=0, ymax=12.5,
				xtick={10,20,30,40,50},
				ytick={0,2.5,5.0,7.5,10,12.5},
				legend style ={ at={(1.03,1)}, 
				anchor=north west, draw=black, 
				fill=white,align=left},
				]
				\addplot 
				coordinates {
					(10,9.8984)(20,2.8395)(30,1.1258)(40,0.5124)(50,0.2535)};
				\addlegendentry{Error min. res. kernel}; 
				\addplot 
				coordinates {
					(10,11.6499)(20,3.6402)(30,1.5460)(40,0.7250)(50,0.3717)};
				\addlegendentry{Error product kernel}; 
				\end{axis}
			\end{tikzpicture}
			\caption{Comparison of uniform approximation errors of several approximations of the peaks function $p(\textbf{x}):= 3(1-x_1)^2\exp(-x_1^2-(x_2+1)^2)-10(x_1/5-x_1^3-x_2^5)\exp(-x_1^2-x_2^2)-(1/3)\exp(-(x_1+1)^2-x_2^2)$. We plotted the errors for the kernel with minimal resolution $\sigma_{r}$ and for the product of two univariate degree $r/2$ kernels, i.e. $K_{r/2}^{\mathrm{KPM}}$ as in \eqref{KKPM}.}\label{compProductJackson2}
\end{figure}

  \section{Numerical computations}\label{sec:numComp}
	
	In this section we discuss the numerical computations that were conducted. 
	All code was written in the Julia programming language and is available on GitHub\footnote{see https://github.com/FelixKirschner/Approximation-Kernels}. At the same website we also list the coefficients of the kernel for various values of $n$ and $r$.
	We present some values of $\sigma_r^2$ for different values of $n$ and $r$ in \cref{tab:ResultsComparison}. 
	We also compare them to the resolution of the product of identical univariate minimum resolution kernels with the same degree.  
	The results show our method is superior to simple multiplication of identical univariate minimum resolution kernels. In \cref{fig:compSig} we plotted some of the values of $\sigma_r$ for different values of $n$. 
	
	For $n = 2$ we were able to compute the coefficients for up to $r = 50$ in a reasonable amount of time ($375.5$ seconds for $\sigma_{50}^2$ on an Apple M1 Pro with $32$GB of RAM). 
	After the symmetry reduction, the corresponding program contains two semidefinite matrix variables of order $676$ and  $650$ and has $1277$ constraints. 
	We used the CSDP solver version $6.2.0$\footnote{available at https://github.com/coin-or/Csdp} (see \cite{Borchers1999}) to compute these values. Without the symmetry reduction the program would have one matrix variable of order ${{52}\choose{2}} = 1326$. 
	We computed the values of $\sigma_r$ for up to $r = 22$ for $n = 3$ and $r = 13$ for $n = 4$. 
	In the latter case, i.e. $n=4, r= 13$, without the symmetry reduction the program contains one matrix of size $2380$. Using the symmetry reduction we can reduce the size to five matrices of the orders $194, 370, 192, 218$ and $38$. For  values of $n>2$ the limiting factor was time. 
	\subsection{Decoupling the degrees}
	Taking a look at problem \eqref{trig kernel SDP} it is clear the value of $r$ on the right-hand-side could be increased to obtain a kernel with potentially smaller resolution. Consider the following problem for fixed $r$ and $r^\prime$ such that $r^\prime \ge r$. 
	\begin{equation}
		 \sigma^2_{r,r^\prime} = \min_{g_\alpha \,:\, \alpha \in \mathbb{N}^n_r} \;\sum_{i=1}^n \left( 1 - g_{e_i} \right)
		\end{equation}
		subject to
	  \begin{eqnarray*}
			1+ \sum_{\alpha \in \mathbb{N}^n_r\setminus \{0\}} 2^{H(\alpha)}g_\alpha \prod_{ i \in [n]} \cos (\alpha_i \phi_i) &=& \left[ \exp (\imath \alpha^T\boldsymbol\phi)\right]_{\alpha \in \mathbb{N}^n_{r^\prime}}^*M\left[ \exp (\imath \alpha^T\boldsymbol\phi)\right]_{\alpha \in \mathbb{N}^n_{r^\prime}} \\
			M &\succeq & 0.
		\end{eqnarray*}
	For $n = 2$ (resp. $n = 3$) we show how the resolution evolves for $r = 3, \dots, 10$ (resp. $r = 2, \dots, 10$) and $r^\prime = r, r+1, \dots, 20$ in Figure \ref{fig:compSigrrprime} (resp. \cref{fig:compSigrrprimen3}). We note that the optimal values in the case $n = 2$ seem to stabilize for $r^\prime \ge r + \lfloor \frac{r-1}{2}\rfloor$, whereas such a stabilization pattern may not be observed for $n \ge 3$. We leave further investigation in this direction for future research.
	
	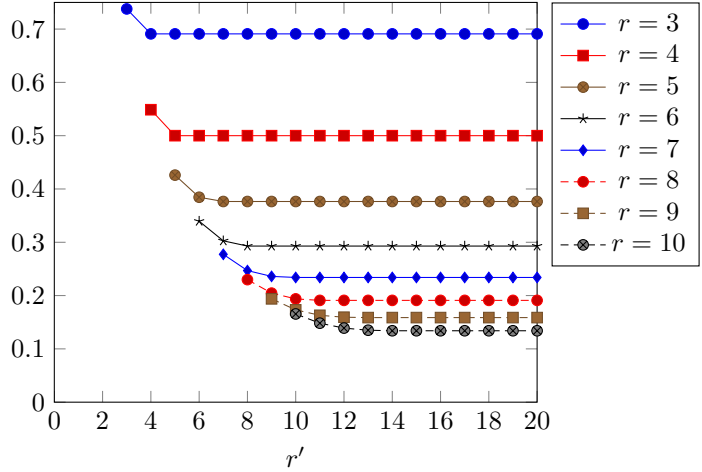
\begin{figure}
		\centering
				\begin{tikzpicture}[scale=1.0]
				\begin{axis}[
				xlabel={$r^\prime$},
				xmin=0, xmax=20,
				ymin=0, ymax=0.75,
				xtick={0,2,4,6,8,10,12,14,16,18,20},
				ytick={0,0.1,0.2,0.3,0.4,0.5,0.6,0.7},
				legend style ={ at={(1.03,1)}, 
        anchor=north west, draw=black, 
        fill=white,align=left},
				]
				\addplot 
				coordinates {
					(3,0.7378)(4,0.691)(5,0.691)(6,0.691)(7,0.691)(8,0.691)(9,0.691)(10,0.691)(11,0.691)(12,0.691)(13,0.691)(14,0.691)(15,0.691)(16,0.691)(17,0.691)(18,0.691)(19,0.691)(20,0.691)	};
					\addlegendentry{$r=3$};
				\addplot
				coordinates {
					(4,0.5487)(5,0.5)(6,0.5)(7,0.5)(8,0.5)(9,0.5)(10,0.5)(11,0.5)(12,0.5)(13,0.5)(14,0.5)(15,0.5)(16,0.5)(17,0.5)(18,0.5)(19,0.5)(20,0.5)
				};\addlegendentry{$r=4$};
				\addplot
				coordinates {
					(5,0.426)(6,0.3845)(7,0.3765)(8,0.3765)(9,0.3765)(10,0.3765)(11,0.3765)(12,0.3765)(13,0.3765)(14,0.3765)(15,0.3765)(16,0.3765)(17,0.3765)(18,0.3765)(19,0.3765)(20,0.3765)};\addlegendentry{$r=5$};
				\addplot
				coordinates {
					(6,0.3395)(7,0.3027)(8,0.2929)(9,0.2929)(10,0.2929)(11,0.2929)(12,0.2929)(13,0.2929)(14,0.2929)(15,0.2929)(16,0.2929)(17,0.2929)(18,0.2929)(19,0.2929)(20,0.2929)
				};\addlegendentry{$r=6$};
				\addplot
				coordinates {
					(7,0.2774)(8,0.2469)(9,0.236)(10,0.234)(11,0.234)(12,0.234)(13,0.234)(14,0.234)(15,0.234)(16,0.234)(17,0.234)(18,0.234)(19,0.234)(20,0.234)
				};\addlegendentry{$r=7$};
				\addplot
				coordinates {
					(8,0.2299)(9,0.2046)(10,0.1938)(11,0.191)(12,0.191)(13,0.191)(14,0.191)(15,0.191)(16,0.191)(17,0.191)(18,0.191)(19,0.191)(20,0.191)
				};\addlegendentry{$r=8$};
				\addplot
				coordinates {
					(9,0.1939)(10,0.1732)(11,0.163)(12,0.1594)(13,0.1587)(14,0.1587)(15,0.1587)(16,0.1587)(17,0.1587)(18,0.1587)(19,0.1587)(20,0.1587)
				};\addlegendentry{$r=9$};
				\addplot
				coordinates {
					(10,0.1655)(11,0.1482)(12,0.139)(13,0.135)(14,0.134)(15,0.134)(16,0.134)(17,0.134)(18,0.134)(19,0.134)(20,0.134)
				};\addlegendentry{$r=10$};
				\end{axis}
				\end{tikzpicture}
				\caption{Plot $\sigma_{r,r^\prime}^2$ vs $r^\prime$ for $r=3,\dots, 10$ and $r^\prime = r, r+1, \dots, 20$ and $n = 2$}\label{fig:compSigrrprime}
			\end{figure}
		
	%
	%
	%
	
%
%
%

\begin{figure}
	\centering
			\begin{tikzpicture}[scale=1.0]
			\begin{axis}[
			xlabel={$r^\prime$},
			xmin=0, xmax=20,
			ymin=0, ymax=2,
			xtick={0,2,4,6,8,10,12,14,16,18,20},
			ytick={0,0.5,1.0,1.5,2.0},
			legend style ={ at={(1.03,1)}, 
	anchor=north west, draw=black, 
	fill=white,align=left},
			]
			
			\addplot
			coordinates {
				(2,2.0)(3,1.8697)(4,1.7753)(5,1.7753)(6,1.7753)(7,1.7753)(8,1.7753)(9,1.7753)(10,1.7753)(11,1.7753)(12,1.7753)(13,1.7753)(14,1.7753)(15,1.7753)(16,1.7753)(17,1.7753)(18,1.7753)(19,1.7753)(20,1.7753)
			};\addlegendentry{$r=2$};
			\addplot
			coordinates {
				(3,1.5)(4,1.4208)(5,1.4021)(6,1.4016)(7,1.4016)(8,1.4016)(9,1.4016)(10,1.4016)(11,1.4016)(12,1.4016)(13,1.4016)(14,1.4016)(15,1.4016)(16,1.4016)(17,1.4016)(18,1.4016)(19,1.4016)(20,1.4016)};\addlegendentry{$r=3$};
			\addplot
			coordinates {
				(4,1.1823)(5,1.0931)(6,1.0672)(7,1.0466)(8,1.0445)(9,1.0445)(10,1.0445)(11,1.0445)(12,1.0445)(13,1.0445)(14,1.0445)(15,1.0445)(16,1.0445)(17,1.0445)(18,1.0445)(19,1.0445)(20,1.0445)
			};\addlegendentry{$r=4$};
			\addplot
			coordinates {
				(5,0.9451)(6,0.8687)(7,0.8359)(8,0.822)(9,0.8182)(10,0.8181)(11,0.8181)(12,0.8181)(13,0.8181)(14,0.8181)(15,0.8181)(16,0.8181)(17,0.8181)(18,0.8181)(19,0.8181)(20,0.8181)
			};\addlegendentry{$r=5$};
			\addplot
			coordinates {
				(6,0.7764)(7,0.702)(8,0.6707)(9,0.6548)(10,0.6487)(11,0.6467)(12,0.6467)(13,0.6467)(14,0.6467)(15,0.6467)(16 ,0.6467)(17,0.6467)(18,0.6467)(19,0.6467)(20,0.6467)
			};\addlegendentry{$r=6$};
			\addplot
			coordinates {
				(7,0.6474)(8,0.5794)(9,0.5485)(10,0.535)(11,0.5295)(12,0.5274)(13,0.5272)(14,0.5272)(15,0.5272)(16,0.5272)(17,0.5272)(18,0.5272)(19,0.5272)(20,0.5272)
			};\addlegendentry{$r=7$};
			\addplot
			coordinates {
				(8,0.5461)(9,0.4888)(10,0.4598)(11,0.4449)(12,0.4384)(13,0.4349)(14,0.4337)(15,0.4336)(16,0.4336)(17,0.4336)(18,0.4336)(19,0.4336)(20,0.4336)
			};\addlegendentry{$r=8$};
			\addplot
			coordinates {
				(9,0.4692)(10,0.4186)(11,0.3911)(12,0.3766)(13,0.3692)(14,0.3661)(15,0.3648)(16,0.3645)(17,0.3645)(18,0.3645)(19,0.3645)(20,0.3645)
			};\addlegendentry{$r=9$};
			\addplot
			coordinates {
				(10,0.4062)(11,0.3634)(12,0.3377)(13,0.3235)(14,0.3157)(15,0.3119)(16,0.3099)(17,0.3091)(18,0.3089)(19,0.3089)(20,0.3089)
			};\addlegendentry{$r=10$};
			\end{axis}
			\end{tikzpicture}
			\caption{Plot $\sigma_{r,r^\prime}^2$ vs $r^\prime$ for $r=2,\dots, 10$ and $r^\prime = r, r+1, \dots, 20$ and $n = 3$}\label{fig:compSigrrprimen3}
		\end{figure}

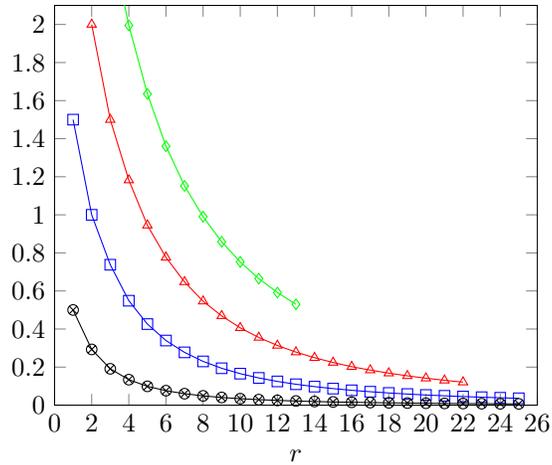
\begin{figure}
	\centering
			\begin{tikzpicture}[scale=1.0]
			\begin{axis}[
			xlabel={$r$},
			xmin=0, xmax=26,
			ymin=0, ymax=2.1,
			xtick={0,2,4,6,8,10,12,14,16,18,20,22,24,26},
			ytick={0,0.2,0.4,0.6,0.8,1,1.2,1.4,1.6,1.8,2.0},
			]
			\addplot[
			color=blue,
			mark=square,
			]
			coordinates {
				(1,1.5)(2,1)(3,0.737831)(4,0.548709)(5,0.426016)(6,0.339528)(7,0.277385)(8,0.229869)(9,0.193916)(10,0.165505)(11,0.143117)(12,0.124814)(13,0.109943)(14,0.097454)(15, 0.087057)(16,0.078179)(17,0.070646)(18,0.064099)(19,0.058459)(20,0.053501)(21,0.049175)
			(22,0.045327)(23,0.041934)(24,0.038891)(25,0.036182)		};
			\addplot[
			color = red,
			mark = triangle,
			]
			coordinates {
				(2,2)(3,1.5)(4,1.182282)(5,0.945126)(6,0.776365)(7,0.647362)(8,0.546111)(9,0.469164)(10,0.406163)(11,0.355589)(12,0.313636)(13,0.278721)(14,0.249285)(15,0.2242)(16,0.2022)(17,0.1843)(18,0.1682)(19,0.1541)(20,0.1417)(21,0.1307)(22,0.1209)
			};
			\addplot[
			color = green,
			mark = diamond,
			]
			coordinates {
				(2,2.9309)(3,2.4561)(4,1.9948)(5,1.6353)(6,1.3605)(7,1.1518)(8,0.99005)(9,0.8584)(10,0.7524)(11,0.6648)(12,0.5917)(13,0.5299)};
			\addplot[
				color = black,
				mark = otimes,
			]
			coordinates {
				(1,0.5)(2,0.2929)(3,0.191)(4,0.134)(5,0.099)(6,0.0761)(7,0.0603)(8,0.0489)(9,0.0405)(10,0.0341)(11,0.0291)(12,0.0251)(13,0.0219)(14,0.0192)(15,0.017)(16,0.0152)(17,0.0136)(18,0.0123)(19,0.0112)(20,0.0102)(21,0.0093)(22,0.0086)(23,0.0079)(24,0.0073)(25,0.0068)
			};
			\end{axis}
			\end{tikzpicture}
			\caption{Plot $\sigma_r^2$ vs $r$ for $n=1,2,3,4$ ($\otimes ,\square, \triangle, \diamond$ resp.) }
			\label{fig:compSig}
		\end{figure}

\section{Concluding remarks}
We have shown how to construct polynomial approximation kernels with minimal resolution on the hypercube.
A major open question is if one may find closed form solutions of the semidefinite programs that yield these kernels.

These type of results are also of independent interest in the study of SDP hierarchies for polynomial optimization on the hypercube, as shown recently by Laurent and Slot \cite{Laurent2021}. In particular, our kernels may be useful to study hierarchies of the Lasserre-type \cite{Lasserre2001} on the hypercube (see also \cite{Klerk2017, Klerk2010}).

The advantage of our approach over the multiplication of univariate minimum resolution kernels is that it is more efficient (fewer coefficients needed for the same quality approximation), while the clear disadvantage is that we have no closed form solution for the coefficients. Having said that, the tables of coefficients only have to computed once using SDP, and we provide a partial list online\footnote{Available at https://github.com/FelixKirschner/Approximation-Kernels/tree/master/SigmaKernels}. Moreover, our approach should become more viable in practice as SDP solvers continue to improve, allowing to compute the coefficients of the kernels in higher dimensions and for larger values of $r$.
\\	
\section*{Acknowledgments}
The authors would like to thank Yuan Xu and Simon Foucart for helpful comments on the paper. 
	Also, we want to thank Monique Laurent and Lucas Slot for fruitful discussions on different angles on the subject. 
	Finally, the first author would like to thank Daniel Brosch for helpful discussions about symmetry reduction.

\bibliographystyle{plain}

\end{document}